\newcommand{\B}[1]{\mbox{\boldmath $#1$}}
\newcommand{\be}{\begin{equation}}
\newcommand{\ee}{\end{equation}}
\newcommand{\ba}{\begin{array}}
\newcommand{\ea}{\end{array}}
\newcommand{\al}{{\alpha}}
\newcommand{\bt}{{\beta}}
\newcommand{\G}{{\Gamma}}
\newcommand{\g}{{\gamma}}
\newcommand{\Om}{\Omega}
\newcommand{\sg}{\sigma}
\newcommand{\tl}{\tilde}
 \journalname{Calcolo}
\begin{document}

\title{A Real QZ Algorithm for Structured Companion Pencils
\thanks{This work was partially supported by GNCS--INDAM and University of Pisa.}
}

\author{P. Boito         \and
        Y. Eidelman  \and
        L. Gemignani
}

\institute{P. Boito \at
              XLIM--DMI UMR CNRS 7252 Facult\'e des Sciences et Techniques, 
 123 avenue A. Thomas, 87060 Limoges, France \\
 and CNRS, Universit\'e de Lyon, Laboratoire LIP (CNRS, ENS Lyon, Inria, UCBL), 46 all\'ee d'Italie, 69364 Lyon Cedex 07, France\\
              \email{paola.boito@unilim.fr}           
           \and
           Y. Eidelman \at
              School of Mathematical Sciences, Raymond and Beverly
Sackler Faculty of Exact Sciences, Tel-Aviv University, Ramat-Aviv,
69978, Israel \\
\email{eideyu@post.tau.ac.il}
       \and
       L. Gemignani \at
       Dipartimento di Informatica, Universit\`a di Pisa,
Largo Bruno Pontecorvo 3, 56127 Pisa, Italy \\
\email{l.gemignani@di.unipi.it}
}

\date{Received: date / Accepted: date}

\maketitle

\begin{abstract}
We design a fast implicit  real QZ algorithm for eigenvalue computation of  structured companion pencils 
 arising from   linearizations of  polynomial rootfinding problems.  The modified  QZ  algorithm computes the
generalized   eigenvalues of  an  $N\times N$   
 structured matrix pencil
 using $O(N)$ flops per iteration and $O(N)$ memory storage.
Numerical experiments  and comparisons confirm the effectiveness and the
stability of the proposed  method.
\keywords{Rank--structured  matrix \and Quasiseparable matrix  \and Real  QZ algorithm \and Lagrange  approximation \and eigenvalue
computation \and complexity}
 \subclass{65F15 \and 65H17}
\end{abstract}

\section{Introduction}\label{sec:intro}
\setcounter{equation}{0}
Linearization techniques based on polynomial interpolation  are becoming nowadays  a standard way to solve numerically 
nonlinear zerofinding problems for polynomials or more 
generally for analytic functions \cite{AKT}. Since in many applications the interest is in the approximation of real zeros, 
 methods using Chebyshev--like expansions are usually employed. Alternatively,  Lagrange interpolation at the roots of unity 
can be considered.  For a real function a straightforward modification of the classical approach \cite{Cor,law}  yields a 
structured companion pencil $\mathcal A(\lambda)=F-\lambda G$ where $F$ and $G$ are  real $N\times N$ low rank corrections of unitary matrices.
The computation of the generalized eigenvalues of this pencil can be performed by means of the  QZ algorithm \cite{WD} 
suitably adjusted to work in 
real arithmetic.  

In this paper we propose  a fast adaptation of the real QZ algorithm for computing the generalized eigenvalues of certain 
$N\times N$ structured  pencils using only $O(N)$ flops per iteration and $O(N)$ memory storage. Since in most cases it is reasonable to assume that the total number of iterations is a small  multiple of $N$ (see e.g., \cite{W_eig}), we have a heuristic complexity estimate of $O(N^2)$ flops to compute all the eigenvalues.

The pencils  $\mathcal A(\lambda)=F-\lambda G$, 
$F, G\in \mathbb R^{N\times N}$,
 we consider here satisfy two basic properties:
\begin{enumerate}
\item $F$ is upper Hessenberg and $G$ is  upper triangular;
\item $F$ and $G$ are rank--one corrections of unitary matrices.
\end{enumerate} 
We refer to  such  a   pencil  $\mathcal A(\lambda)$ as a companion--like  pencil,   since   the class includes companion pencils as a special case.
Sometimes $\mathcal A(\lambda)$ is also denoted by $(F,G)\in \mathbb R^{N\times N}\times \mathbb R^{N\times N}$.

Let $(F_k,G_k)$, $k\geq 0$, $F_0=F, G_0=G$, be the sequence of matrix pairs (pencils) 
generated by the real QZ algorithm starting from the companion--like pencil 
$\mathcal A(\lambda)$. Single or double shifting is applied in the generic iteration $F_k\rightarrow F_{k+1}$ $G_k\rightarrow G_{k+1}$  in order to 
carry out all the computations in real arithmetic.  Whatever strategy is used, it is found that both $\mathcal A_k(\lambda)$ and 
$\mathcal A_{k+1}(\lambda)$ are still companion--like pencils. As a consequence of this  invariance we obtain that all the matrices involved in the 
QZ iteration inherit a rank structure in their upper triangular parts. This makes it possible to represent $F_k, G_k$ and $F_{k+1}, G_{k+1}$ as data--sparse 
matrices specified  by a number of parameters (called generators) which is  linear  w.r.t. the size of the matrices. This general principle has been applied, for instance, in \cite{AMRVW} and \cite{last}.

In this paper we  introduce a convenient set of generators 
and design a  structured  variant of the real QZ 
iteration  which takes in input the generators of $F_k$ and $G_k$ together with the shift parameters and returns  as output the generators of 
$F_{k+1}$ and $G_{k+1}$.  It is shown that the arithmetic cost for each iteration is $O(N)$ using linear memory storage.  Numerical experiments 
confirm the effectiveness and the robustness of the resulting  eigensolver.

The paper is organized as follows. In Section \ref{sec:2}
we set up the scene by introducing the matrix  problem and its basic properties.
In Section \ref{sec:3} we  define an appropriate set of generators for the matrices involved. 
In Section \ref{sec:4} we design the fast adaptation of the QZ algorithm using these generators and exploiting 
the resulting data--sparse representations.  We focus here on double shifting, since the single shifted iteration 
has been already devised in \cite{last}.  A proof  of the correctness of the algorithm is given in Appendix. 
Finally, in Section \ref{sec:5} we show the results of numerical experiments,  whereas conclusion and future work are presented in Section  \ref{sec:6}.

\section{The Problem Statement}\label{sec:2}
\setcounter{equation}{0}
Companion pencils and  companion--like  pencils expressed in the 
Lagrange basis  at the roots of unity are specific instances of the 
following general class. 

\begin{definition}\label{pn}
The matrix pair $(A,B)$,  $A, B\in \mathbb R^{N\times N}$, belongs to the
class 
$\mathcal P_N \subset \mathbb R^{N\times N} \times \mathbb R^{N\times N}$   
of companion--like  pencils 
iff:
\begin{enumerate}
\item $A\in \mathbb R^{N\times N}$ is upper Hessenberg;
\item $B\in \mathbb R^{N\times N}$ is upper triangular;
\item There exist two vectors $\B z\in \mathbb R^N$ and $\B w\in \mathbb R^N$ 
and an orthogonal matrix $V\in \mathbb R^{N\times N}$  such that
\begin{equation}\label{a}
 A=V-\B z \B w^*; 
\end{equation}
\item There exist two vectors $\B p\in\mathbb R^N$ and $\B q\in\mathbb R^N$ and
an orthogonal matrix $U\in \mathbb R^{N\times N}$  such that
\begin{equation}\label{b}
 B=U-\B p \B q^*.
\end{equation}
\end{enumerate}
\end{definition}

In order to characterize the individual properties of the matrices $A$ and $B$ 
we give some additional definitions.

\begin{definition}\label{bn}
We denote  by  ${\mathcal T}_{N}$ the class of  upper triangular matrices 
$B\in \mathbb R^{N\times N}$ that are rank-one perturbations of orthogonal
matrices, i.e., such that \eqref{b} holds for a suitable orthogonal matrix $U$ 
and vectors $\B p, \B q$.
\end{definition}

Since $B$ is upper triangular the strictly lower triangular part of the 
orthogonal matrix $U$ in \eqref{b} coincides with the corresponding part of the
rank one matrix $\B p \B q^*$, i.e.,
\be\label{semop23}
U(i,j)= p(i) q^*(j),\quad 1\le j<i\le N,
\end{equation}
where $\{p(i)\}_{i=1,\ldots,N}$ and $\{q(j)\}_{j=1,\ldots,N}$ are the entries 
of $\B p$ and $\B q$, respectively.

\begin{definition}\label{un}
We denote by ${\mathcal U}_{N}$ the class of orthogonal matrices 
$U\in \mathbb R^{N\times N}$ that satisfy the condition \eqref{semop23}, i.e.,
for which there exist vectors $\B p, \B q$ such that
the matrix $B=U-\B p \B q^*$ is an upper triangular matrix.
\end{definition}

Observe that we have
\[
U\in {\mathcal U}_{N} \Rightarrow
{\rm rank}\, U(k+1\colon N,1\colon k)\le 1, \quad k=1,\dots,N-1.
\]
From the nullity theorem \cite{FM}, see also \cite[p.142]{EGH1}, it follows 
that the same  property also holds in the strictly upper triangular part, 
namely,
\begin{equation}\label{setp26}
U\in {\mathcal U}_{N} \Rightarrow
{\rm rank}\, U(1\colon k, k+1 \colon N)\le 1, \quad k=1,\dots,N-1.
\end{equation}

\begin{definition}\label{hn}
We denote  by  ${\mathcal H}_{N}$ the class of  upper Hessenberg  matrices 
$A\in \mathbb R^{N\times N}$ that are rank one perturbations of orthogonal
matrices, i.e., such that \eqref{a} holds for a suitable orthogonal matrix $V$ 
and vectors $\B z, \B w$.
\end{definition}

\begin{definition}\label{qn}
We denote by ${\mathcal V}_{N}$ the class of orthogonal matrices 
$V\in \mathbb R^{N\times N}$ for which there exist vectors $\B z,\B w$ such 
that the matrix $A=V-\B z \B w^*$ is an upper Hessenberg matrix.
\end{definition}

We find that 
\[
V\in {\mathcal V}_{N} \Rightarrow
{\rm rank}\, V(k+2\colon N,1\colon k)\le 1, \quad k=1,\dots,N-2.
\]
Again from the nullity theorem  it follows that a similar  property also
holds in the upper triangular part, namely,
\begin{equation}\label{setp27}
V\in {\mathcal V}_{N} \Rightarrow
{\rm rank}\, V(1\colon k, k \colon N)\le 2, \quad k=1,\dots,N.
\end{equation}

In this paper we consider the problem of efficiently computing the (generalized) eigenvalues of a 
companion--like matrix pencil $ (A,B)\in \mathcal P_N$ by exploiting the rank  and banded structures 
of the matrix classes mentioned above.
The QZ algorithm  is  the customary method for solving generalized eigenvalue 
problems numerically  by means of unitary  transformations
(see e.g. \cite{GVL} and \cite{WD}). 
For the pair $(A,B)$  in Hessenberg/triangular form the implicit QZ step consists in the computation of 
unitary matrices $Q$ and $Z$ such that 
\be\label{sten21}
A_1=Q^*AZ\;\mbox{is upper Hessenberg},\;
B_1=Q^*BZ\;\mbox{is upper triangular}
\end{equation} 
and some initial conditions hold.
For the QZ  iteration  applied to a real matrix pair with  double shifting  the initial condition is
\be\label{semer22d}
(Q^*p(A B^{-1}))(:,2:N)=0, 
\end{equation}
where $p(z)=\alpha + \beta z +\gamma z^2 $ is the shift polynomial.
In this case one obtains the orthogonal Hessenberg matrices $Q$ and $Z$ in the 
form 
\be\label{smap22d}
Q=\tl Q_1\tl Q_2\cdots\tl Q_{N-2}\tl Q_{N-1},\quad 
Z=\tl Z_1\tl Z_2\cdots\tl Z_{N-2}\tl Z_{N-1},
\end{equation}  
where 
\be\label{smip22d}
\begin{gathered}
\tl Q_i=I_{i-1}\oplus Q_i\oplus I_{N-i-2},\;i=1,\dots,N-2,\quad 
\tl Q_{N-1}=I_{N-2}\oplus Q_{N-1},\\
\tl Z_i=I_{i-1}\oplus Z_i\oplus I_{N-i-2},\;i=1,\dots,N-2,\quad 
\tl Z_{N-1}=I_{N-2}\oplus Z_{N-1}
\end{gathered}
\end{equation}
and $Q_i,Z_i,\;i=1,\dots,N-2$ are $3\times3$ orthogonal matrices, $Q_{N-1},
Z_{N-1}$ are real Givens rotations.

Since the Hessenberg/triangular 
form is preserved under the QZ iteration  an easy computation then yields
\begin{equation}\label{qzss}
(A,B)\in \mathcal P_N, \ (A,B)\overset{{\rm QZ \ step}} 
\rightarrow (A_1,B_1) \Rightarrow (A_1,B_1)\in \mathcal P_N.
\end{equation}
Indeed if $Q$ and $Z$ are unitary then from (\ref{a}) and (\ref{b}) it follows
that the matrices $A_1=Q^*AZ$ and $B_1=Q^*BZ$ satisfy the relations
$$
A_1=V_1-\B z_1\B w_1^*,\quad B_1=U_1-\B p_1\B q_1^*
$$
with the unitary matrices $V_1=Q^*VZ,\;U_1=Q^*UZ$ and the vectors
$\B z_1=Q^*z,\;\B w_1=Z^*w,\;\B p_1=Q^*p,\;\B q_1=Z^*q$. Moreover one can 
choose the unitary matrices $Q$ and $Z$ such that the matrix $A_1$ is upper
Hessenberg and the matrix $B_1$ is upper triangular.
Thus, one  can in principle think of  designing a structured QZ iteration 
that, given in input a condensed representation of the matrix pencil  
$(A,B)\in \mathcal P_N$, returns as output a condensed representation of  
$(A_1,B_1)\in \mathcal P_N$ generated by one step of the classical QZ algorithm
applied to $(A,B)$. In the next sections we first  introduce an  eligible  
representation  of a  rank-structured matrix pencil $(A,B)\in \mathcal P_N$ and
then  discuss the modification of this representation under the QZ process.

\section{Quasiseparable Representations}\label{sec:3}
\setcounter{equation}{0}
In this  section we exploit the properties of  quasiseparable 
representations of rank--structured matrices \cite{EGfirst},
\cite[Chapters 4,5]{EGH1}.
First  we recall some general  results and  definitions. Subsequently, 
we describe their  adaptations 
for the representation of the matrices involved in the structured QZ 
iteration applied to an input 
matrix pencil $(A,B)\in \mathcal P_N$.

A matrix $M=\{M_{ij}\}_{i,j=1}^N$ is {\em $(r^L,r^U)$-quasiseparable}, with 
$r^L, r^U$ positive integers, if, using MATLAB\footnote{MATLAB is a registered 
trademark of The Mathworks, Inc..} notation, 
\begin{gather*}
\max_{1\leq k\leq N-1}{\rm rank}\,(M(k+1:N,1:k))\leq r^L,\\
\max_{1\leq k\leq N-1}{\rm rank}\,(M(1:k,k+1:N))\leq r^U.
\end{gather*}
Roughly speaking, this means that every submatrix extracted from the lower 
triangular part of $M$ has rank at most $r^L$, and every submatrix extracted 
from the upper triangular part of $M$ has rank at most $r^U$. Under this 
hypothesis, $M$ can be represented using $\mathcal{O}(((r^L)^2+(r^U)^2) N)$ parameters. 
In this section we present such a representation.

The quasiseparable representation of a rank--structured matrix consists of a 
set of vectors and matrices used to generate its  entries. For the sake of 
notational simplicity and clarity, generating  matrices  and vectors are  denoted by a 
roman lower-case  letter.
 
In this representation, the entries of $M$ take the form
 
 \begin{equation}\label{qs1}
M_{ij}=\left\{\begin{array}{ll}
p(i)a_{ij}^{>}q(j),&1\le j<i\le N,\\
d(i),&1\le i=j\le N,\\
g(i)b_{ij}^{<}h(j),&1\le i<j\le N\end{array} \right.
\end{equation}
 where:
 \begin{itemize}
 \item[-] $p(2),\ldots,p(N)$ are row vectors of length $r^L$,
  $q(1),\ldots,q(N-1)$ are column vectors of length $r^L$, and
  $a(2),\ldots,a(N-1)$ are matrices of size $r^L\times r^L$; these are called 
{\em lower quasiseparable generators} of order $r^L$;
 \item[-] $d(1),\ldots,d(N)$ are numbers (the diagonal entries),
 \item[-] $g(2),\ldots,g(N)$ are row vectors of length $r^U$,
  $h(1),\ldots,h(N-1)$ are column vectors of length $r^U$, and
  $b(2),\ldots,b(N-1)$ are matrices of size $r^U\times r^U$; these are called 
{\em upper quasiseparable generators} of order $r^U$;
\item[-]
 the matrices $a_{ij}^{>}$ and $b_{ij}^{<}$ are defined as
 \[
\left\{\begin{array}{ll}a_{ij}^{>}=a(i-1)\cdots
a(j+1) \  {\rm for}\  i>j+1;\\ a_{j+1,j}^{>}=1
\end{array}\right.
\]
and
\[
\left\{\begin{array}{ll}
b_{ij}^{<}=b(i+1)\cdots
b(j-1) \ {\rm  for} \  j>i+1; \\
b_{i,i+1}^{<}=1.
\end{array}\right.
\]
 \end{itemize}

From (\ref{setp26}) it follows that any matrix from the class 
${\mathcal U}_{N}$ has upper quasiseparable generators with orders equal to one.

 The quasiseparable representation can be generalized to the case where $M$ is 
a block matrix, and to the case where the generators do not all have the same 
size, provided that their product is well defined. Each block $M_{ij}$ of size 
$m_i\times n_j$ is represented as in \eqref{qs1}, except that the sizes of the 
generators now depend on $m_i$ and $n_j$, and possibly on the index of $a$ and 
$b$. More precisely:
 \begin{itemize}
 \item[-]
 $p(i), q(j), a(k)$ are matrices of sizes $m_i\times
r^L_{i-1},\;r^L_j\times n_j,\; r^L_k\times r^L_{k-1}$,
respectively; 
\item[-]
$d(i)\;(i=1,\dots,N)$ are $m_i\times n_i$ matrices,
\item[-]
$g(i),h(j),b(k)$ are matrices of sizes
$m_i\times r^U_i,\;r^U_{j-1}\times n_j,\; r^U_{k-1}\times r^U_k$,
respectively.
 \end{itemize}
 The numbers $r^L_k,r^U_k\;(k=1,\dots,N-1)$ are called the {\it orders}
of these generators.
 
It is worth noting that lower and upper quasiseparable  generators of a matrix 
are not uniquely defined. A set of generators with minimal orders can be 
determined according to the ranks of maximal submatrices  located in  the lower
and upper triangular parts of  the matrix.

One advantage of the block representation for the purposes of the present paper
consists in the fact that $N\times N$ upper Hessenberg matrices can be treated 
as $(N+1)\times (N+1)$ block upper triangular ones by choosing block sizes as 
\be\label{aprmn18s}
m_1=\dots=m_N=1,\;m_{N+1}=0,\quad
n_1=0,\;n_2=\dots=n_{N+1}=1.
\end{equation}
Such a treatment allows also to consider quasiseparable representations which 
include the main diagonals of matrices. Assume that $C$ is an $N\times N$ 
scalar
matrix with the entries in the upper triangular part represented in the form
\be\label{appr18}
C(i,j)=g(i)b^<_{i-1,j}h(j),\quad 1\le i\le j\le N
\end{equation}
with matrices $g(i),h(i)\;(i=1,\dots,N),\;b(k)\;(k=1,\dots,N-1)$ of sizes
$1\times r_i,r_i\times1,r_k\times r_{k+1}$. The elements $g(i),h(i)\;
(i=1,\dots,N),\;b(k)\;(k=1,\dots,N-1)$ are called {\it upper triangular
generators} of the matrix $C$ with orders $r_k\;(k=1,\dots,N)$. 
From (\ref{setp27}) it follows that any matrix from the class 
${\mathcal V}_{N}$ has upper triangular generators
with orders not greater than two. 
If we treat a matrix $C$ as a block one with entries if sizes (\ref{aprmn18s}) 
we conclude
that the elements $g(i)\;(i=1,\dots,N),\;h(j-1)\;(j=2,\dots,N+1),\;b(k-1)\;
(k=2,\dots,N)$ are upper quasiseparable generators of $C$. 

Matrix operations involving zero-dimensional arrays (empty matrices) are 
defined according to the rules used in MATLAB and described in \cite{deBoor}.
In particular, the  product of
a  $m\times 0$ matrix by a $0\times m$ matrix is a $m\times m$ matrix with all 
entries equal to 0.
Empty matrices may be used in assignment statements as a convenient way to add 
and/or delete rows or columns of matrices.

\subsection{{\bf Representations of matrix pairs from the class
${\mathcal P}_{N}$}}\label{psec}

Let $(A,B)$ be a matrix pair from the class ${\mathcal P}_{N}$.
The corresponding matrix $A$ from the class ${\mathcal H}_{N}$ is completely
defined by the following parameters:
\begin{enumerate}
\item  the subdiagonal entries $\sg^A_k\;(k=1,\dots,N-1)$ of the matrix $A$;  
\item the upper triangular generators $g_V(i),h_V(i)\;(i=1,\dots,N),\;
b_V(k)\;(k=1,\dots,N-1)$ of the corresponding unitary matrix $V$ from the class
${\mathcal V}_N$;
\item  the vectors of perturbation $\B z={\rm col}(z(i))_{i=1}^N,\;
\B w={\rm col}(w(i))_{i=1}^N$.
\end{enumerate}
From  \eqref{setp27} it follows that 
the matrix $V\in \mathcal V_N$ has upper triangular generators with orders
not greater than two.

The corresponding matrix $B$ from the class ${\mathcal T}_{N}$ is completely
defined by the following parameters:
\begin{enumerate}
\item  the diagonal entries $d_B(k)\;(k=1,\dots,N)$ of the matrix $B$;  
\item the upper quasiseparable generators $g_U(i)\;(i=1,\dots,N-1),\;
h_U(j)\;(j=2,\dots,N)$,
$b_U(k)\;(k=2,\dots,N-1)$ of the corresponding unitary 
matrix $U$ from the class ${\mathcal U}_N$;
\item  the vectors of perturbation $\B p={\rm col}(p(i))_{i=1}^N,\;
\B q={\rm col}(q(i))_{i=1}^N$.
\end{enumerate}
From  \eqref{setp26} it follows that 
the matrix $U\in \mathcal U_N$ has upper quasiseparable generators with orders
equal one.

All the given parameters define completely the matrix pair $(A,B)$ from the 
class $\mathcal P_N$. Updating of these parameters while keeping the minimal
orders of generators is a task of the fast QZ  iteration described in the next section.

\section{A fast implicit double shifted  QZ iteration via generators}\label{sec:4}
\setcounter{equation}{0}

In this section we present our fast adaptation of the double--shifted QZ algorithm for a 
matrix pair $(A,B)\in \mathcal P_N$.  The algorithm takes in input a quasiseparable representation of the 
matrices $A$ and $B$ together with the coefficients of the real quadratic shift polynomial and it returns as output a 
possibly not minimal quasiseparable representation of the matrices $(A_1,B_1)\in \mathcal P_N$ such that 
\eqref{qzss} holds. The algorithm computes the unitary matrices $Q_i$ and $Z_i$ defined in 
\eqref{smip22d}. It  basically splits into  the following four stages:
\begin{enumerate}
\item a {\bf preparative phase} where $Q_1$  is found so as to satisfy the shifting condition;
\item the {\bf chasing the bulge} step where the unitary matrices $Q_2, \ldots, Q_{N-2}$ and 
$Z_1, \ldots, Z_{N-3}$ are determined in such a way to  perform the Hessenberg/triangular reduction procedure;
\item a {\bf closing} phase where the last  three transformations $Q_{N-1}$, $Z_{N-2}$ and $Z_{N-1}$ are carried out; 
\item the final stage of {\bf recovering the generators} of the updated  pair.
\end{enumerate}
For the sake of brevity the stage 2 and 3 are grouped together  by using empty and zero quantities when needed.
 The correctness of the algorithm is proved in the Appendix.  
Some technical details concerning shifting  strategies and shifting 
techniques are  discussed in the section on numerical experiments. Compression of generators 
yielding minimal representations can be achieved by using the  methods  devised in 
\cite{last}. The incorporation of these compression  schemes does not alter the complexity of the main algorithm 
shown below. 
 
 \vspace{10pt}
 
\centerline{{\bf ALGORITHM}: Implicit QZ iteration for companion--like pencils with double shift}

\begin{enumerate}

\item {\bf INPUT}:
\begin{enumerate}
\item the subdiagonal entries
$\sg^A_k\;(k=1,\dots$,$N-1)$ of the matrix $A$;
\item  the upper triangular generators 
$g_V(i),h_V(i)\;(i=1,\dots,N),\;b_V(k)\;(k=1,\dots,N-1)$ with orders 
$r_k^V\;(k=1,\dots,N)$ of the matrix $V$;
\item  the diagonal entries $d_B(k)\;
(k=1,\dots,N)$ of the matrix $B$;
\item the  upper quasiseparable generators 
$g_U(i)\;(i=1,\dots,N-1),\;h_U(j)\;(j=2,\dots,N),\;b_U(k)\;
(k=2,\dots,N-1)$ with orders $r^U_k\;(k=1,\dots,N-1)$ of the matrix $U$;
\item  the perturbation vectors 
$\B z={\rm col}(z(i))_{i=1}^N$, $\B w={\rm col}(w(i))_{i=1}^N$, 
$\B p={\rm col}(p(i))_{i=1}^N$, $\B q={\rm col}(q(i))_{i=1}^N$;
\item the coefficients of the shift polynomial  $p(z)=\alpha + \beta z +\gamma z^2 \in \mathbb R[z]$;
\end{enumerate}
\item {\bf OUTPUT}:
\begin{enumerate}
\item the subdiagonal entries
$\sg^{A_1}_k\;(k=1,\dots,N-1)$ of the matrix $A_1$; 
\item  upper triangular generators
$g^{(1)}_V(i),h^{(1)}_V(i)\;(i=1,\dots,N),\;b^{(1)}_V(k)\;(k=1,\dots,N-1)$ 
 of the matrix $V_1$;
\item the  diagonal entries $d^{(1)}_B(k)\;(k=1,\dots,N)$ of the matrix $B_1$;
\item  upper quasiseparable 
generators $g^{(1)}_U(i)\;(i=1,\dots,N-1),\;h^{(1)}_U(j)\;(j=2,\dots,N),\;
b^{(1)}_U(k)\;(k=2,\dots,N-1)$ of the matrix $U_1$;
\item perturbation vectors 
$\B z_1={\rm col}(z^{(1)}(i))_{i=1}^N,\;\B w_1={\rm col}(w^{(1)}(i))_{i=1}^N,\;
\B p_1={\rm col}(p^{(1)}(i))_{i=1}^N,\;\B q_1={\rm col}(q^{(1)}(i))_{i=1}^N$;
\end{enumerate}
\item{\bf COMPUTATION}:
\begin{itemize}
\item {\bf Preparative Phase}
\begin{enumerate}
\item  Compute $\B s=(p(A B^{-1})\B e_1)(1\colon 3)$ 
and determine the $3\times3$ orthogonal matrix $Q_1$ from the condition
\be\label{fe218}
Q_1^*\B s=
\left(\ba{ccc}\times&0&0\ea\right)^*.
\end{equation}
\item Compute
\be\label{parq18d}
\left(\ba{c}\tl g_V(3)\\\beta^V_3\ea\right)=
Q_1^*
\left(\ba{ccc}g_V(1)h_V(1)&g_V(1)b_V(1)h_V(2)&g_V(1)b_V(1)b_V(2)\\
\sg^V_1&g_V(2)h_V(2)&g_V(2)b_V(2)\\z(3)w(1)&\sg_2^V&g_V(3)\ea\right)
\end{equation}

and determine the matrices $f^V_3,\phi^V_3$ of sizes $2\times2,2\times r^V_3$ from 
the partition
\be\label{pra182d}
\beta^V_3=\left[\ba{cc}f^V_3&\phi^V_3\ea\right].
\end{equation}
\item Compute 
\be\label{feura21}
\left(\ba{c}z^{(1)}(1)\\\chi_3\ea\right)=
Q_1^* \left(\ba{c} z(1)\\z(2)\\z(3)\ea\right), 
\quad 
\gamma_2=\left(\ba{c} w(1)\\w(2)\ea\right)
\end{equation}
with the number $z^{(1)}$ and two-dimensional columns $\chi_3,\gamma_2$.
 Compute
\be\label{janj11f}
f^A_3=f^V_3-\chi_3\gamma_2^*,\quad \varphi_3^A=\phi^V_3.
\end{equation}
\item Set
\be\label{oct141d}
c_2=\left(\ba{c}p(1)\\p(2)\ea\right),\quad 
\theta_1=q(1),\;\theta_2=\left(\ba{c}q(1)\\q(2)\ea\right).
\end{equation}
Compute
\be\label{lyutf22d}
d_U(1)=d_B(1)+p(1)q(1),\quad d_U(2)=d_B(2)+p(2)q(2)
\end{equation} 
and set
\be\label{octo14.1d}
f^U_2=\left(\ba{cc}d_U(1)&g_U(1)h_U(2)\\p(2)q(1)&d_U(2)\ea\right),\;
\phi^U_2=\left(\ba{c}g_U(1)b_U(2)\\g_U(2)\ea\right),
\end{equation}
\be\label{liufm12}
\varepsilon=g_U(1)h_U(2)-p(1)q(2),
\end{equation}
\be\label{oct14.1d}
f^B_2=\left(\ba{cc}d_B(1)&\varepsilon\\0&d_B(2)\ea\right),\;
\varphi^B_2=\phi^U_2.
\end{equation}
\end{enumerate}
\item {\bf Chasing the Bulge} For $k=1,\dots,N-1$ perform the following:
\begin{enumerate}
\item {\em (Apply $Q_k$ and determine $Z_k$).} Compute the two-dimensional column $\varepsilon_{k+1}^B$ via
\be\label{jll8od}
\varepsilon^B_{k+1}=\varphi^B_{k+1}h_U(k+2)-c_{k+1}q(k+2),
\end{equation}
and the $3\times3$ matrix $\Phi_k$ by the formula 
\be\label{lmay2ujod}
\Phi_k=Q_k^*\left(\ba{cc}f^B_{k+1}&\varepsilon^B_{k+1}\\0&d_B(k+2)\ea\right).
\end{equation}
 Determine the $3\times3$ orthogonal matrix $Z_k$ such that
\be\label{imay2ujod}
\Phi_k(2:3,:)Z_k=\left(\ba{ccc}0&\times&\times\\0&0&\times\ea\right).
\end{equation}
\item  {\em (Determine $Q_{k+1}$).} Compute the column
\be\label{ep}
\epsilon^A_{k+2}=\varphi_{k+2}^Ah_V(k+2)-\chi_{k+2}w(k+2)
\end{equation}
and the $3\times3$ matrix $\Om_k$ by the formula
\be\label{aprl18d}
\Om_k=\left(\ba{cc}f^A_{k+2}&\epsilon^A_{k+2}\\0&\sg^A_{k+2}\ea\right)Z_k.
\end{equation}
Determine the $3\times3$ orthogonal matrix $Q_{k+1}$ and the number
$(\sg^A_k)^{(1)}$ such that
\be\label{aprle18d}
Q^*_{k+1}\Om_k(:,1)=\left(\ba{c}(\sg^A_k)^{(1)}\\0\\0\ea\right).
\end{equation}
\item  {\em (Update generators for $U$ and $B$).} Compute
\be\label{na}
d_U(k+2)=d_B(k+2)+p(k+2)q(k+2),
\end{equation}
\be\label{liufre20d}
\begin{gathered}
\left(\ba{cc}\tl d_U(k+2)&\tl g_U(k+2)\\\times&\beta^U_{k+2}\ea\right)
=Q_k^* \ \tilde U_k \ \left(\ba{cc}Z_k&0\\0&I_{r^U_{k+2}}\ea\right)
\end{gathered}
\end{equation}
where 
\[
\tilde U_k=\left(\ba{ccc}f^U_{k+1}&\phi^U_{k+1}h_U(k+2)&\phi^U_{k+1}b_U(k+2)\\
p(k+2)\theta^*_{k+1}&d_U(k+2)&g_U(k+2)\ea\right)
\]
and determine the matrices $f^U_{k+2},\phi^U_{k+2}$ of
sizes $2\times2,2\times r^U_{k+2}$ from the partition
\be\label{fevra20d}
\bt^U_{k+2}=\left[\ba{cc}f^U_{k+2}&\phi^U_{k+2}\ea\right].
\end{equation}
Compute
\be\label{liufg20d}
\begin{gathered}
\left(\ba{cc}\tl h_U(k+2)&\tl b_U(k+2)\ea\right)=\\
\left(\ba{ccc}I_2&0&0\\0&h_U(k+2)&b_U(k+2)\ea\right)
\left(\ba{cc}Z_k&0\\0&I_{r^U_{k+2}}\ea\right).
\end{gathered}
\end{equation}
Compute
\be\label{feura21d}
\begin{gathered}
\left(\ba{c}p^{(1)}(k)\\c_{k+2}\ea\right)=
Q^*_k\left(\ba{c}c_{k+1}\\p(k+2)\ea\right) \\
\left(\ba{c}q^{(1)}(k)\\\theta_{k+2}\ea\right)=
Z^*_k\left(\ba{c}\theta_{k+1}\\q(k+2)\ea\right)
\end{gathered}
\end{equation}
with the numbers $p^{(1)}(k),q^{(1)}(k)$ and two-dimensional columns 
$c_{k+2},\theta_{k+2}$.
 Compute
\be\label{fegre21d}
f^B_{k+2}=f^U_{k+2}-c_{k+2}\theta^*_{k+2},\quad\varphi^B_{k+2}=\phi^U_{k+2}.
\end{equation}
\item {\em (Update generators for $V$ and $A$).} Compute
\be\label{jll8o}
\sg^V_{k+2}=\sg^A_{k+2}+z(k+3)w(k+2),
\end{equation}
\be\label{leap18d}
\begin{gathered}
\left(\ba{cc}\tl d_V(k+3)&\tl g_V(k+3)\\\times&\beta^V_{k+3}\ea\right)=\\
Q^*_{k+1} \ \tilde V_{k+2} \
\left(\ba{cc}Z_k&0\\0&I_{r^V_{k+3}}\ea\right),
\end{gathered}
\end{equation}
where 
\[
\tilde V_{k+2}=\left(\ba{ccc}f^V_{k+2}&\phi^V_{k+2}h_V(k+2)&\phi^V_{k+2}b_V(k+2)\\
z(k+3)\gamma^*_{k+1}&\sg^V_{k+2}&g_V(k+3)\ea\right).
\]
Determine the matrices $f^V_{k+3},\phi^V_{k+3}$ of sizes 
$2\times2,2\times r^V_{k+3}$  from the partition
\be\label{pra18d}
\beta^V_{k+3}=\left[\ba{cc}f^V_{k+3}&\phi^V_{k+3}\ea\right].
\end{equation}
Compute
\be\label{mmay3hbd}
\begin{gathered}
\left(\ba{cc}\tl h_V(k+3)&\tl b_V(k+3)\ea\right)=\\
\left(\ba{ccc}I_2&0&0\\0&h_V(k+2)&b_V(k+2)\ea\right)
\left(\ba{cc}Z_k&0\\0&I_{r^V_{k+3}}\ea\right),
\end{gathered}
\end{equation}
and
\be\label{oct14qqd}
\begin{gathered}
\left(\ba{c}z^{(1)}(k+1)\\\chi_{k+3}\ea\right)=
Q_{k+1}^*\left(\ba{c}\chi_{k+2}\\z(k+3)\ea\right) \\
\left(\ba{c}w^{(1)}(k)\\\gamma_{k+2}\ea\right)=
Z_k^*\left(\ba{c}\gamma_{k+1}\\w(k+2)\ea\right)
\end{gathered}
\end{equation}
with the numbers $z^{(1)}(k+1),w^{(1)}(k)$ and two-dimensional columns 
$\chi_{k+3},\gamma_{k+2}$.
 Compute
\be\label{janj1f}
f^A_{k+3}=f^V_{k+3}-\chi_{k+3}\gamma_{k+2}^*,\quad 
\varphi_{k+3}^A=\phi^V_{k+3}.
\end{equation}
\end{enumerate}

\item {\bf Recovering of generators}
\begin{enumerate}
\item Set
\begin{gather*}
g_V^{(1)}(i-2)=\tl g_V(i),\;i=3,\dots,N+2,\\
h_V^{(1)}(j-3)=\tl h_V(j),\;j=4,\dots,N+3,\\
b_V^{(1)}(k-3)=\tl b_V(k),\;j=4,\dots,N+2.
\end{gather*}
\item Set
\begin{gather*}
g_U^{(1)}(i-2)=\tl g_U(i),\;i=3,\dots,N+1,\\
h_U^{(1)}(j-2)=\tl h_U(j),\;j=4,\dots,N+2,\\
b_U^{(1)}(k-2)=\tl b_U(k),\;k=3,\dots,N+1,\\
d_U^{(1)}(k-2)=\tl d_U(k),\;k=3,\dots,N+2.
\end{gather*}
\end{enumerate}
\end{itemize}
\end{enumerate}
{\bf END}

\begin{remark}\label{compre}
The complete algorithm incorporates the compression technique 
introduced in \cite{last} to  further process the generators returned by the algorithm by computing final   upper 
quasiseparable  generators
$g^{(1)}_U(i)\;(i=1,\dots,N-1)),\;h^{(1)}_U(j)\;(j=2,\dots,N),\;b^{(1)}_U(k)\;
(k=2,\dots,N-1)$ with orders not greater than  one of the matrix $U_1$ and, moreover, 
upper triangular generators
$g^{(1)}_V(i),h^{(1)}_V(i)$
$(i=1,\dots,N),\;b^{(1)}_V(k)$ $(k=1,\dots,N-1)$ with
orders not greater than two of the matrix $V_1$.
\end{remark}

\begin{remark}\label{complexity}
It can be interesting to compare the complexity and timings of the above algorithm versus the single-shift version presented in \cite{last}. Roughly speaking, each iteration of double-shift Fast QZ requires about  twice as many floating-point operations as single-shift Fast QZ; however, the double-shift version works in real arithmetic, whereas the single-shift algorithm requires complex operations. So we can expect a double-shift iteration to be $\mu/2$ times faster than a single-shift one, where $\mu$ is the speedup factor of real vs. complex arithmetic. A na\"ive operation count suggests that a complex addition requires two real flops and a complex multiplication requires six real flops
. This yields on average $\mu\approx 4$, although in practice $\mu$ is more difficult to quantify; here, for practical purposes, we have used the experimental estimate given below.

For the computation of all eigenvalues, the double-shift algorithm is about $\rho\mu/2$ times faster than the single-shift version, where $\rho$ is the ratio between the number of iterations needed to approximate a single eigenvalue with double shift and the number of iterations per eigenvalue with single shift. In practice, $\rho$ is often close to $2$, because each double-shift iteration approximates two eigenvalues instead of a single one, so the total number of iterations will be cut by one half.

Experiments done on the same machine and configuration used for the Fortran tests in Section \ref{sec:5} gave the following results:
\begin{itemize}
\item
After testing on a large number of scalar $ax+y$ operations, the parameter $\mu$ was estimated at about $2$. We used scalar operations for consistency with the structure of the algorithm. It should be pointed out, however, that experimental estimates of $\mu$ may depend on the machine and on the way the operations are computed, because the weight of increased storage and bandwidth may become prominent. (The same experiment run on matrix-vector operations gives $\mu\approx 4$ as predicted by the operation count).
\item
For random polynomials we found $\rho\approx 2$, whereas in the case of cyclotomic polynomials the double-shift algorithm converged faster and $\rho$ was closer to $3$.
\item
Comparison on total running time showed double-shift QZ to be about twice as fast as the single-shift version in the case of random polynomials, and about three times as fast for cyclotomic polynomials, which is consistent with the discussion above.  
\end{itemize}   
\end{remark}

In the next section we report the results of numerical experiments to illustrate the performance of the algorithm.

\section{Numerical Results}\label{sec:5}
The  fast QZ algorithm for eigenvalue computation  of structured pencils described in the previous section
has been implemented in MATLAB and in Fortran 90.\footnote{Both implementations are available for download at\\ {\tt http://www.unilim.fr/pages\_perso/paola.boito/software.html}.} The program  deals with  real  companion--like pencils  by applying the QZ method 
with single or double shift and it returns as output  the list of real or complex conjugate paired  approximations of the  eigenvalues.

The design of a practical algorithm needs to account for various possible shifting strategies and deflation techniques. 
Deflation is an important concept in the practical implementation of the QR/QZ  iteration. Deflation amounts
to setting a small subdiagonal element of the Hessenberg matrix  $A$  to zero.
This is called deflation because it splits the Hessenberg/triangular  matrix  pair into two smaller
subproblems which may be independently refined further. We say that $a_{k+1,k}$ is negligible  if 
\[
|a_{k+1,k}|\leq {\tt u}(|a_{k+1,k+1}|+|a_{k,k}|), 
\]
and then we set $a_{k+1,k}=0$ and split  the computation into two smaller eigenproblems. 
Here {\tt u} denotes the machine precision. Another kind of deflation can happen 
in the matrix $B$ and it is related to the occurrence of infinite eigenvalues.  If $b_{k,k}$ is numerically zero then 
there exists at least an infinite eigenvalue and this can be deflated by moving up the zero entry to the  top left corner of $B$.
The criterion used in our implementation to check the nullity of  $b_{k,k}$  is 
\[
|b_{k,k}|\leq {\tt u}\parallel B\parallel.
\]
Eligible shift polynomials are generally determined from the (generalized) eigenvalues of the trailing principal submatrices of $A$ and $B$
We  first compute the generalized eigenvalues  $(\alpha_1, \beta_1)$ and $(\alpha_2, \beta_2)$
of the matrix pair $(A(n-1\colon n, n-1\colon n), B(n-1\colon n, n-1\colon n))$. If they correspond with  a pair of 
  complex conjugate numbers then we 
set 
\[
p(z)=(\beta_2 z -\alpha_2) (\beta_2 z -\alpha_2).
\]
Otherwise we perform a linear shift, that is, $p(z)=\beta z-\alpha$, where  the eigenvalue 
$\sigma=\alpha/\beta$ is the closest to the  value $a_{N,N}/b_{N,N}$.

Our  resulting  algorithm has been tested on several numerical examples. We begin with some classical polynomials that are meant to test the algorithm for speed and for backward stability. With the exception of Example \ref{ex:polynorm}, all polynomials are normalized so as to have 2-norm equal to 1: in practice, the algorithm is always applied to $p/\|p\|_2$. Absolute forward and backward errors for a polynomial $p(x)=\sum_{j=0}^N p_jx^j=p_N\prod_{k=1}^N(x-\alpha_k)$ are defined as
\begin{eqnarray*}
{\rm forward\, error}=\max_{k=1,\ldots ,N}|\alpha_k-\tilde{\alpha}_k|,\\
{\rm backward\, error}=\max_{j=0,\ldots ,N}|p_j-\tilde{p}_j|,
\end{eqnarray*}
where $\{\tilde{\alpha}_k\}_{k=1,\ldots,N}$ are the computed roots, and $\{\tilde{p}_j\}_{j=0,\ldots,N}$ are the polynomial coefficients reconstructed from the computed roots, working in high precision. The polynomial $\tilde{p}(x)=\sum_{j=0}^N \tilde{p}_jx^j$ is also normalized so that $\|\tilde{p}\|_2=1$ prior to backward error computation.

Examples \ref{example:fortran_random} and \ref{example:fortran_cyclo} use the Fortran implementation of Fast QZ, compiled with GNU Fortran compiler and running under Linux Ubuntu 14.04 on a laptop equipped with an Inter i5-2430M processor and 3.8 GB memory. 
All the other tests are based on the MATLAB version of the code and were run on a Mac Book Pro equipped with MATLAB R2016a.

\begin{example}\label{example:fortran_random}
{\em Fortran implementation applied to random polynomials.} Polynomial coefficients are random real numbers uniformly chosen in $[-1,1]$. Here $N$ denotes the degree. Table \ref{tab:1random} shows forward  absolute errors w.r.t. the roots computed by LAPACK, as well as the average number of iterations per eigenvalue and the running times, in seconds, for LAPACK and Fast QZ. All the results are averages over 10 runs for each degree.

In this example, Fast QZ is faster than LAPACK for polynomials of degree larger than $250$. (Of course, the results of timing comparisons may vary slightly depending on the machine and architecture). The quadratic growth of the running time for our algorithm is shown in Figure \ref{figure_random}.
\end{example}
\begin{table}
\caption{Timings and errors for the Fortran implementation of Fast QZ applied to random polynomials.}\label{tab:1random}
\centering
\begin{tabular}{c|c|c|c|c}
\hline\noalign{\smallskip}
$N$  & abs. forward error & average n. it.&Fast QZ time&LAPACK time\\
\noalign{\smallskip}\hline\noalign{\smallskip}
$50$&$1.34$e$-14$&$1.82$&$1.19$e$-2$&$8.80$e$-3$\\
$100$&$1.09$e$-14$&$1.67$&$2.73$e$-2$&$9.70$e$-3$\\
$200$&$1.87$e$-14$&$1.59$&$8.84$e$-2$&$6.26$e$-2$\\
$300$&$3.03$e$-14$&$1.50$&$1.76$e$-1$&$1.97$e$-1$\\
$400$&$1.88$e$-13$&$1.46$&$3.12$e$-1$&$4.71$e$-1$\\
$500$&$8.08$e$-14$&$1.42$&$4.72$e$-1$&$1.18$\\
$600$&$4.73$e$-13$&$1.45$&$7.03$e$-1$&$2.32$\\
$700$&$2.19$e$-13$&$1.41$&$9.54$e$-1$&$4.04$\\
$800$&$1.46$e$-13$&$1.39$&$1.22$&$5.15$\\
$900$&$1.04$e$-13$&$1.37$&$1.50$&$9.00$\\
$1000$&$1.57$e$-13$&$1.39$&$1.90$&$13.06$\\
\noalign{\smallskip}\hline
\end{tabular}
\end{table}

\begin{figure}
\centering
  \includegraphics[width=\textwidth]{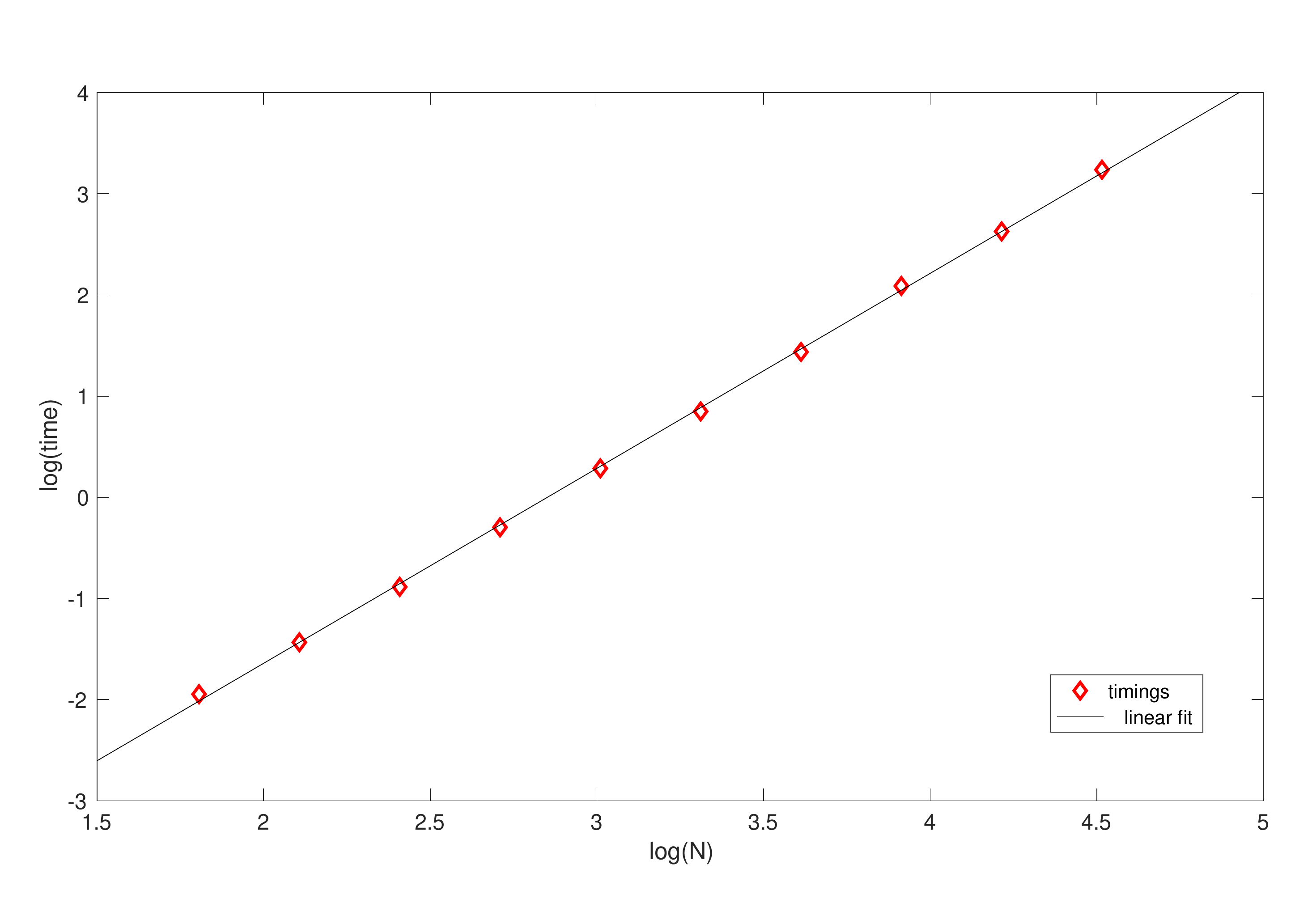}
\caption{This is a log-log plot of running times vs. polynomial degree $N$ for Example \ref{example:fortran_random}. Here we have chosen $N$ as powers of $2$, from $N=2^6=64$ to $N=2^{15}=32768$. The linear fit has equation $y=1.93x-5.50$, which is consistent with the $\mathcal{O}(N^2)$ complexity of Fast QZ.}
\label{figure_random}      
\end{figure}

\begin{example}\label{example:fortran_cyclo}
{\em Fortran implementation applied to cyclotomic polynomials.} The polynomials used in this example take the form $p(x)=x^N-1$. In this case we know the exact roots, which can be computed using the Fortran function {\tt cos} and {\tt sin}. We can therefore compute errors for Fast QZ and for Lapack, both with respect to the ``exact'' roots: FastQZ turns out to be as accurate as LAPACK. Table \ref{tab:1cyclo} shows forward  absolute errors, as well as the average number of iterations per eigenvalue and running times (in seconds). 
Figure \ref{figure_cyclo} shows a logarithmic plot of the running times for Fast QZ, together with a linear fit.
\end{example}

\begin{table}
\caption{Timings and absolute forward errors for the Fortran implementation of Fast QZ applied to cyclotomic polynomials. Errors are computed w.r.t. ``exact'' roots.}\label{tab:1cyclo}
\centering
\begin{tabular}{c|c|c|c|c|c}
\hline\noalign{\smallskip}
$N$  & err. Fast QZ & err. LAPACK & average n. it.&Fast QZ time&LAPACK time\\
\noalign{\smallskip}\hline\noalign{\smallskip}
$100$&$4.65$e$-15$&$3.11$e$-15$&$1.38$&$3.00$e$-2$&$9.00$e$-3$\\
$200$&$5.31$e$-15$&$8.67$e$-15$&$1.25$&$7.90$e$-2$&$5.20$e$-2$\\
$300$&$6.76$e$-15$&$1.37$e$-14$&$1.19$&$1.52$e$-1$&$1.67$e$-1$\\
$400$&$1.05$e$-14$&$1.74$e$-14$&$1.16$&$2.82$e$-1$&$3.97$e$-1$\\
$500$&$9.49$e$-15$&$2.28$e$-14$&$1.14$&$4.03$e$-1$&$1.03$\\
$600$&$1.46$e$-14$&$2.85$e$-14$&$1.12$&$5.79$e$-1$&$2.01$\\
$700$&$1.51$e$-14$&$3.19$e$-14$&$1.12$&$7.85$e$-1$&$3.40$\\
$800$&$1.53$e$-14$&$3.90$e$-14$&$1.10$&$9.73$e$-1$&$5.39$\\
$900$&$1.93$e$-14$&$3.95$e$-14$&$1.10$&$1.24$&$8.00$\\
$1000$&$1.69$e$-14$&$4.84$e$-14$&$1.10$&$1.53$&$11.18$\\
$1500$&$3.00$e$-14$&$7.37$e$-14$&$1.09$&$3.35$&$41.47$\\
$2000$&$2.45$e$-14$&$1.02$e$-13$&$1.08$&$5.80$&$107.97$\\
\noalign{\smallskip}\hline
\end{tabular}
\end{table}

\begin{figure}
\centering
  \includegraphics[width=\textwidth]{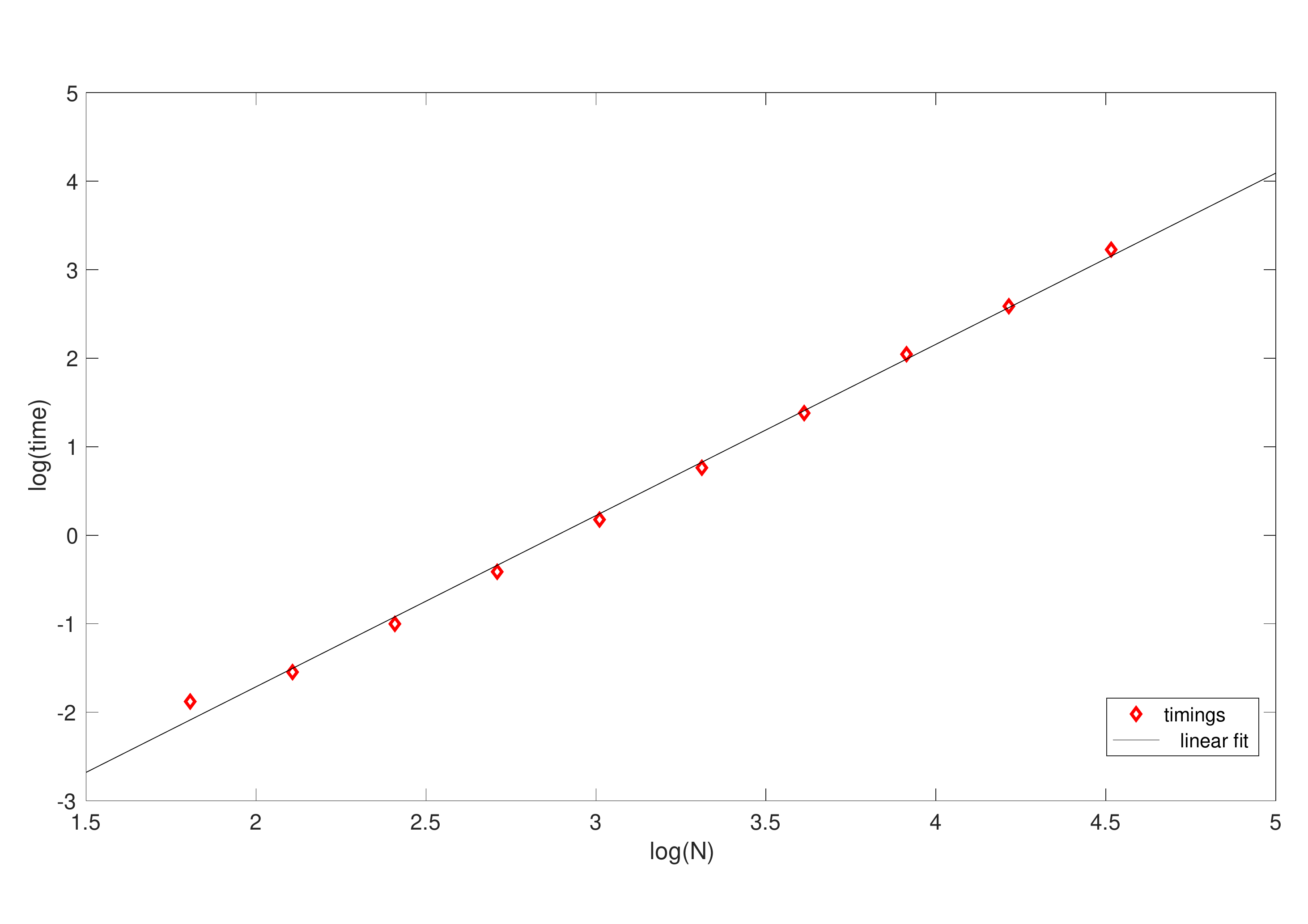}
\caption{Log-log plot of running times vs. polynomial degree for Example \ref{example:fortran_cyclo}. The linear fit has equation $y=1.93x-5.58$.}
\label{figure_cyclo}      
\end{figure}

\begin{example}
In this example we use a classical set of test polynomials taken from \cite{TT} . The polynomials are all of degree 20:
\begin{enumerate}
\item the Wilkinson polynomial, i.e., $P(x)=\prod_{k=1}^{20}(x-k)$,
\item
the polynomial with roots uniformly spaced in [-1.9, 1.9],
\item
$P(x)=\sum_{k=0}^{20}x^k/k!$,
\item
the Bernoulli polynomial of degree 20,
\item
$P(x)=1+x+x^2+\ldots+x^{20}$,
\item
the polynomial with roots $2^{-10}, 2^{-9},\ldots ,2^9$,
\item
the Chebyshev polynomial of degree 20.
\end{enumerate}
Table \ref{tab:2} shows absolute forward and backward errors for our fast QZ and for classical QZ applied to the companion pencil. For the purpose of computing forward errors we have taken as $\{\alpha_k \}_{k=1,\ldots,N}$ either the exact roots, if known, or numerical roots computed with high accuracy.

Forward errors may vary, consistently with the conditioning of the problem. However, backward errors are always of the order of the machine epsilon, which points to a backward stable behavior in practice.

\end{example}

\begin{table}
\caption{Forward and backward errors for a set of ill-conditioned polynomials. Note that the MATLAB implementation of classical QZ sometimes finds infinite roots, which prevent computation of the backward error. This behavior is denoted by the entry Inf.}
\label{tab:2}  
\centering
\begin{tabular}{c|c|c|c|c}
\hline\noalign{\smallskip}
$P(x)$  & f. err. (fast QZ) & f. err. (classical QZ) & b. err. (fast QZ) & b. err. (classical QZ)\\
\noalign{\smallskip}\hline\noalign{\smallskip}
1&$28.73$&Inf&$6.52$e$-16$&Inf\\
2&$5.91$e$-13$&$8.07$e$-13$&$8.07$e$-16$&$1.11$e$-15$\\
3&$5.70$&Inf&$2.22$e$-16$&Inf\\
4&$3.76$e$-10$&$1.83$e$-12$&$1.72$e$-15$&$1.20$e$-15$\\
5&$3.06$e$-15$&$1.09$e$-15$&$4.52$e$-15$&$1.58$e$-15$\\
6&$1.09$e$-2$&$2.30$e$-3$&$2.28$e$-15$&$3.05$e$-15$\\
7&$5.47$e$-11$&$1.68$e$-11$&$1.08$e$-15$&$1.91$e$-15$\\
\noalign{\smallskip}\hline
\end{tabular}
\end{table}

\begin{example}\label{example:JT}
We apply here our structured algorithm to some polynomials taken from the test suite proposed by Jenkins and Traub in \cite{JT}.
The polynomials are:
\begin{itemize}
\item[] $p_1(x)=((x-a)(x-1)(x+a))$, with $a= 10^{-8}$, $10^{-15}$, $10^8$, $10^{15}$,
\item[] $p_3(x)=\prod_{j=1}^r(x-10^{-j})$, with $r=10$, $20$,
\item[] $p_4(x)=(x-0.1)^3(x-0.5)(x-0.6)(x-0.7)$,
\item[] $p_7=(x-0.001)(x-0.01)(x-0.1)(x-0.1+ai)(x-0.1-ai)(x-1)(x-10)$, with $a=10^{-10}$,
\item[] $p_{10}(x)=(x-a)(x-1)(x-a^{-1})$, with $a=10^3$, $10^6$, $10^9$,
\item[] $p_{11}(x)=\prod_{j=1-m}^{m-1}(x-e^{\frac{ij\pi}{2m}})\prod_{j=m}^{3m}0.9e^{\frac{ij\pi}{2m}}$, with $m=15$.
\end{itemize}
In particular, the polynomial $p_1(x)$ is meant to test whether large or 
small zeros may pose a difficulty, the polynomial $p_3(x)$ can be used to test for underflow, the polynomials $p_4(x)$ and $p_7(x)$ test for multiple or nearly multiple roots, whereas $p_{10}(x)$ and $p_{11}(x)$ test for deflation stability. Table \ref{tab:2.5} shows absolute forward and backward errors, computed as in the previous example, for Fast QZ and classical QZ. Note that larger values of $r$ for $p_3(x)$ tend to slow down convergence, so for $r=20$ we needed to increase the allowed number of iterations per eigenvalue (before an exceptional shift is applied). 

When QZ is tested on the polynomial $p_1(x)$ with large values of $a$, normalization of the coefficients inevitably leads to a numerically zero leading coefficient and therefore to infinite eigenvalues. In this case, both fast and classical QZ retrieve the root $1$ with accuracy up to machine precision. Of course one may also try using the non-normalized polynomials, in which case Fast QZ finds roots $\{1, 1, -1\}$ and classical QZ finds roots $\{1, 0, 0\}$, up to machine precision.
\end{example}

\begin{table}
\caption{Forward and backward errors for polynomials taken from Jenkins and Traub's test suite; see Example \ref{example:JT}.}
\label{tab:2.5}  
\centering
\begin{tabular}{l|c|c|c|c}
\hline\noalign{\smallskip}
$P(x)$  & f. err. (fast QZ) & f. err. (class. QZ) & b. err. (fast QZ) & b. err. (class. QZ)\\
\noalign{\smallskip}\hline\noalign{\smallskip}
$p_1(x)$, $a=1$e$-8$&$1.52$e$-8$&$1.00$e$-8$&$2.22$e$-16$&$1.11$e$-16$\\
$p_1(x)$, $a=1$e$-15$&$1.64$e$-8$&$8.08$e$-16$&$1.90$e$-16$&$1.11$e$-16$\\
$p_3(x)$, $r=10$&$8.76$e$-6$&$1.00$e$-6$&$8.60$e$-16$&$3.61$e$-16$\\
$p_3(x)$, $r=15$&$1.25$e$-6$&$1.37$e$-6$&$6.80$e$-16$&$9.10$e$-16$\\
$p_3(x)$, $r=20$&$1.99$e$-4$&$9.90$e$-7$&$3.14$e$-15$&$8.07$e$-16$\\
$p_4(x)$ & $9.088$e$-6$&$4.26$e$-6$&$6.66$e$-16$&$3.33$e$-16$\\
$p_7(x)$, $a=1$e$-10$&$1.91$e$-5$&$6.47$e$-6$&$2.77$e$-16$&$1.11$e$-16$\\
$p_{10}(x)$, $a=1$e$+3$&$2.71$e$-16$&$0$&$1.91$e$-16$&$0$\\
$p_{10}(x)$, $a=1$e$+6$&$1.16$e$-16$&$8.25$e$-18$&$8.20$e$-17$&$5.83$e$-18$\\
$p_{10}(x)$, $a=1$e$+9$&$1.81$e$-16$&$0$&$1.28$e$-16$&$1.11$e$-16$\\
$p_{11}(x)$, $m=15$&$1.11$e$-14$&$9.87$e$-15$&$3.45$e$-14$&$1.80$e$-14$\\
\noalign{\smallskip}\hline
\end{tabular}
\end{table}

\begin{example}\label{example:jumping}
{\em The jumping polynomial.} This is a polynomial of degree 20 where the coefficients are heavily unbalanced and QZ applied to the companion pencil tends to work better than computing the eigenvalues of the companion matrix (see also \cite{last}). The polynomial is defined as $p(x)=\sum_{k=0}^{20} p_kx^k$, where  $p_k=10^{6(-1)^{(k+1)} -3}$ for $k=0,\ldots,20$. Table \ref{tab:3} shows that Fast QZ is just as accurate as classical QZ, and more accurate than the MATLAB command {\tt roots}.
\end{example}

\begin{table}
\caption{Forward and backward errors for several methods applied to a polynomial with highly unbalanced coefficients (Example \ref{example:jumping}).}
\label{tab:3} 
\centering
\begin{tabular}{c|c|c}
\hline\noalign{\smallskip}
method  & forward error & backward error \\
\noalign{\smallskip}\hline\noalign{\smallskip}
fast QZ&$2.78$e$-15$&$4.94$e$-15$\\
classical QZ&$1.49$e$-15$&$3.22$e$-15$\\
balanced QR&$2.46$e$-9$&$5.86$e$-9$\\
unbalanced QR&$1.68$e$-15$&$2.72$e$-15$\\
\noalign{\smallskip}\hline
\end{tabular}
\end{table}

\begin{example}\label{ex:polynorm}
In order to test the behavior of backward error for non-normalized polynomials (that is, for unbalanced pencils), we consider polynomials of degree $50$ with random coefficients and 2-norms ranging from $1$ to $10^{14}$. For each polynomial $p$ we apply QZ (structured or unstructured) without normalization to compute its roots. Then we form a polynomial $\tilde{p}$ from the computed roots, working in high precision, and define the 2-norm absolute backward error as 
$$
{\rm backward\, error_2}=\min_{\alpha\in\mathbb{R}}\|p-\alpha\tilde{p}\|_2.
$$ 
In practice, the value of $\alpha$ that minimizes the backward error is computed as $\alpha=\left(\sum_{i=0}^N p_i\tilde{p}_i\right) /\sum_{i=0}^N\tilde{p}_i^2$.

Figure \ref{fig:polynorm} shows that in this example the backward error grows proportionally to $\|p\|_2^2$ and its behavior when using Fast QZ is very similar to the case of classical QZ (that is, the Matlab function {\tt eig}). See also the analysis in \cite{AMRVW}.
\end{example}

\begin{figure}
\centering
  \includegraphics[width=0.8\textwidth]{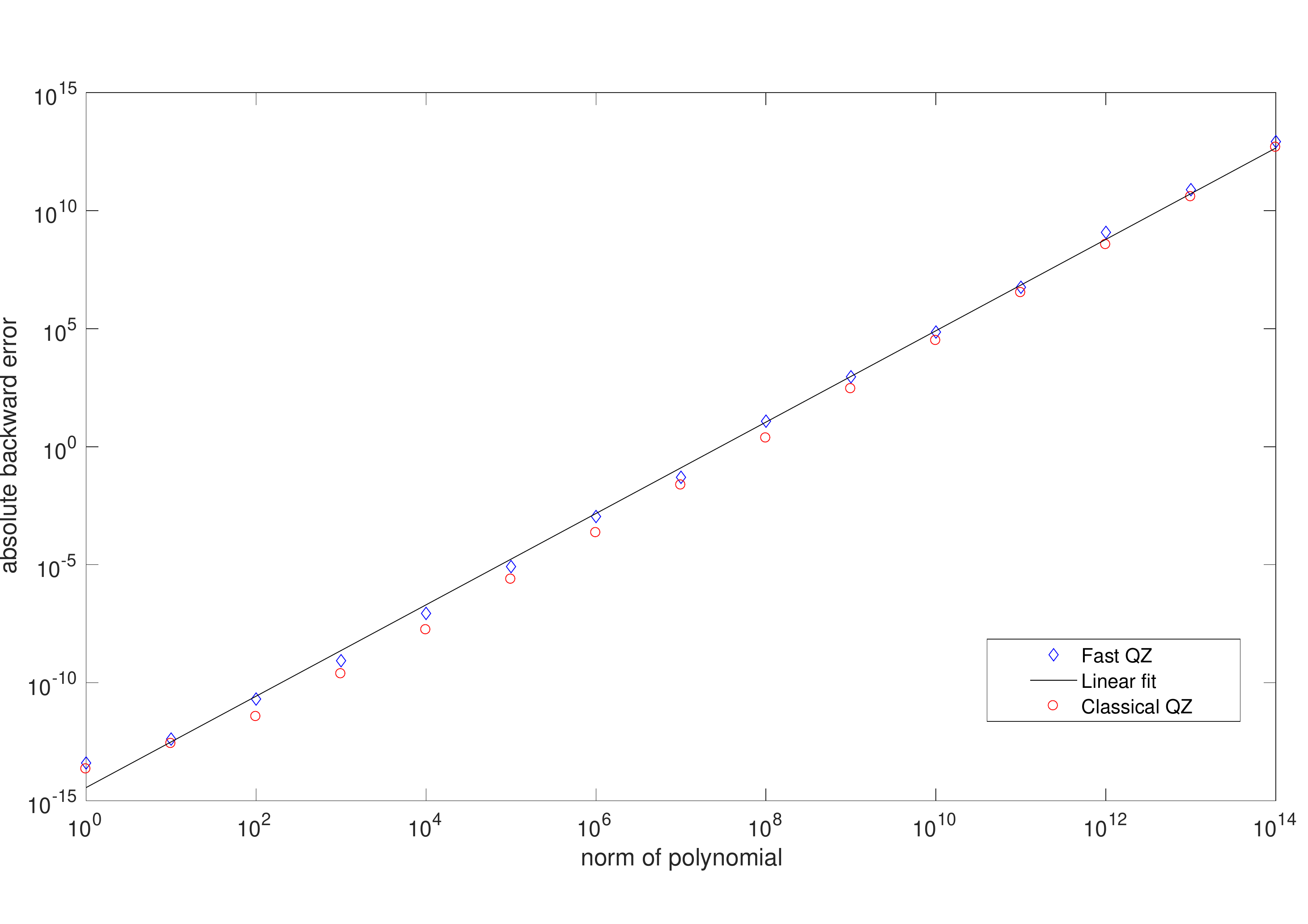}
\caption{Absolute backward error vs. polynomial norm for Example \ref{ex:polynorm}. The black line is a linear fit for the backward error of Fast QZ. Its equation is $y=1.94x-14.4$, which suggests that the absolute backward error grows proportionally to $\|p\|_2^2$.}
\label{fig:polynorm}      
\end{figure}

Our  algorithm has been tested on several numerical examples resulting from the linearization of nonlinear eigenvalue problems by using 
Lagrange type interpolation schemes. In particular, if $f\ \Omega\subseteq  \mathbb R \ \rightarrow \ \mathbb R$  is analytic then 
increasingly accurate approximations 
of its  zeros can be found  by  rootfinding  methods applied to certain   polynomial approximations of $f$.  
 The  unique polynomial of degree less than $n$ interpolating the function $f(z)$  at the $N-$th roots 
of unity $z_k=e^{\displaystyle{2\pi(k-1)/N}}$, $1\leq k\leq N$, can be expressed as 
\[
p(z)=(z^N-1)\sum_{j=1}^n\frac{w_j f_j}{z-z_j}, 
\]
where 
\[
f_j=f(z_j), \quad w_j=\Big( \prod_{k=1,k\neq j}(z_j-z_k)\Big)^{-1}=  z_j/N, \quad 1\leq j\leq N.
\]
In \cite{Cor} it was shown that the roots of $p(z)$ are the finite eigenvalues of the matrix pencil $F-z G$, 
$F,G\in \mathbb C^{(N+1)\times (N+1)}$, given by 
\begin{equation}\label{f1}
F=\left[\begin{array}{cccc}
0 & -f_1/\xi_1& \ldots & -f_N/\xi_N\\
w_1 \xi_1& z_1 \\
\vdots & & \ddots\\
w_N\xi_N & & &z_N
\end{array}\right], \quad 
G=\left[\begin{array}{cccc}
0 \\
 & 1 \\
 & & \ddots\\
 & & & 1
\end{array}\right], 
\end{equation}
where $\xi_1, \ldots, \xi_N$ are nonzero  real numbers used  for balancing purposes.
Observe that since the size of the matrices is $N+1$ we obtain at least two spurious infinite eigenvalues. 
By  a suitable congruence transformation $F\rightarrow F_1=Q F Q^*$ and 
$G\rightarrow G_1=Q G Q^*$ with $Q$ orthogonal,  we generate  an equivalent   real matrix pair $(F_1, G_1)$  where 
$G_1=G$ and $F_1$ is arrowhead  with $2\times 2$  orthogonal diagonal blocks. Then the  usual Hessenberg/triangular reduction procedure 
can be applied by returning   a final  real matrix pair  $(\tilde A,\tilde B)$.  One infinite 
eigenvalue can immediately be deflated  by simply performing a  permutation between the first and the second rows of $\tilde A$ and $\tilde B$ by 
 returning a final matrix pair $(A,B)$ belonging to the class $\mathcal P_{N}$.  It can be shown that if 
$\xi_i=\xi$ for all $i$ then this latter matrix  pair is the companion pencil associated with the interpolating polynomial expressed in the power basis. 
Otherwise, if $\xi_i$ are not constant then $A$ is generally a dense Hessenberg matrix which can be represented as a rank one 
modification of an orthogonal matrix.

In the following examples we test the application of Fast QZ to the barycentric Lagrange interpolation on the roots of unity in order to find zeros of functions  or solve eigenvalue problems. We point out here some implementation details:
\begin{itemize}
\item Scaling. The first row and column of the matrix $F$ can be scaled independently without modifying $G$. We consistently normalize them so that $\|F(1,:)\|_2=$\\$\|F(:,1)\|_2=1$, which makes the pencil more balanced.
\item Deflation of infinite eigenvalues. Spurious infinite eigenvalues can be eliminated by applying repeatedly the permutation trick outlined above for the pencil $(\tilde{A},\tilde{B})$. This leaves us, of course, with the problem of choosing a suitable deflation criterion. In practice, we perform this form of deflation when $|\tilde{A}(1,1)|<\varepsilon\sqrt{N}$, where $\varepsilon$ is the machine epsilon. 
\item Reduction of the arrowhead pencil to Hessenberg/triangular form: this can be done in a fast (e.g., $O(N^2)$) way via Givens rotations that exploit structure, see e.g. \cite{Law_thesis}, Section 2.2.2.
\end{itemize}

\begin{example}\label{ex:AKT1}
 This example is discussed in \cite{AKT}. Consider the function $f(z)=\sin(z-0.3)\log(1.2-z)$. We seek the zeros of $f$ in the unit disk; the exact zeros are $0.2$ and $0.3$. Table \ref{tab:AKT1} shows the computed approximations of these zeros for several values of $N$ (number of interpolation points). The results are consistent with findings in \cite{AKT}, where $50$ interpolation points yielded an accuracy of 4 digits.
\end{example}

\begin{table}
\caption{Approximations of the zeros of $f(z)=\sin(z-0.3)\log(1.2-z)$. Here $N$ is the number of interpolation points. See Example \ref{ex:AKT1}.}
\label{tab:AKT1} 
\centering
\begin{tabular}{c|l|l}
\hline\noalign{\smallskip}
$N$  & approx. of $0.2$ & approx. of $0.3$ \\
\noalign{\smallskip}\hline\noalign{\smallskip}
$20$&$0.2153$ &$0.2841$\\
$30$&$0.2014$ &$0.2986$\\
$40$&$0.20016$&$0.29983$\\
$50$&$0.200021$&$0.299978$\\
$60$&$0.2000028$&$0.2999970$\\
$100$&$0.2000000011$&$0.2999999988$\\
$200$&$0.199999999999894$&$0.300000000000120$\\
\noalign{\smallskip}\hline
\end{tabular}
\end{table}

\begin{example}\label{ex:matrixeig}
This is also an example from \cite{AKT}. Define the matrix
$$
A=\left(\begin{array}{rrrr}
3.2&1.5&0.5&-0.5\\
-1.6&0.0&-0.4&0.6\\
-2.1&-2.2&0.2&-0.1\\
20.7&9.3&3.9&-3.4\\
\end{array}\right).
$$
We want to compute its eigenvalues by approximating the zeros of the polynomial $p(\lambda)=\det({A}-\lambda I)$. The exact eigenvalues are $0.2$, $0.3$, $1.5$ and $-2$. 
Interpolation plus Fast QZ using $6$ nodes yields all the correct eigenvalues up to machine precision.

One may also apply a similar approach to the computation of the eigenvalues in the unit circle for a larger matrix. See Figures \ref{fig:randmatrix120} and \ref{fig:randmatrix60} for tests on two $100\times 100$ matrices with random entries (uniformly chosen in [-1,1]). \end{example}

\begin{figure}
\centering
  \includegraphics[width=\textwidth]{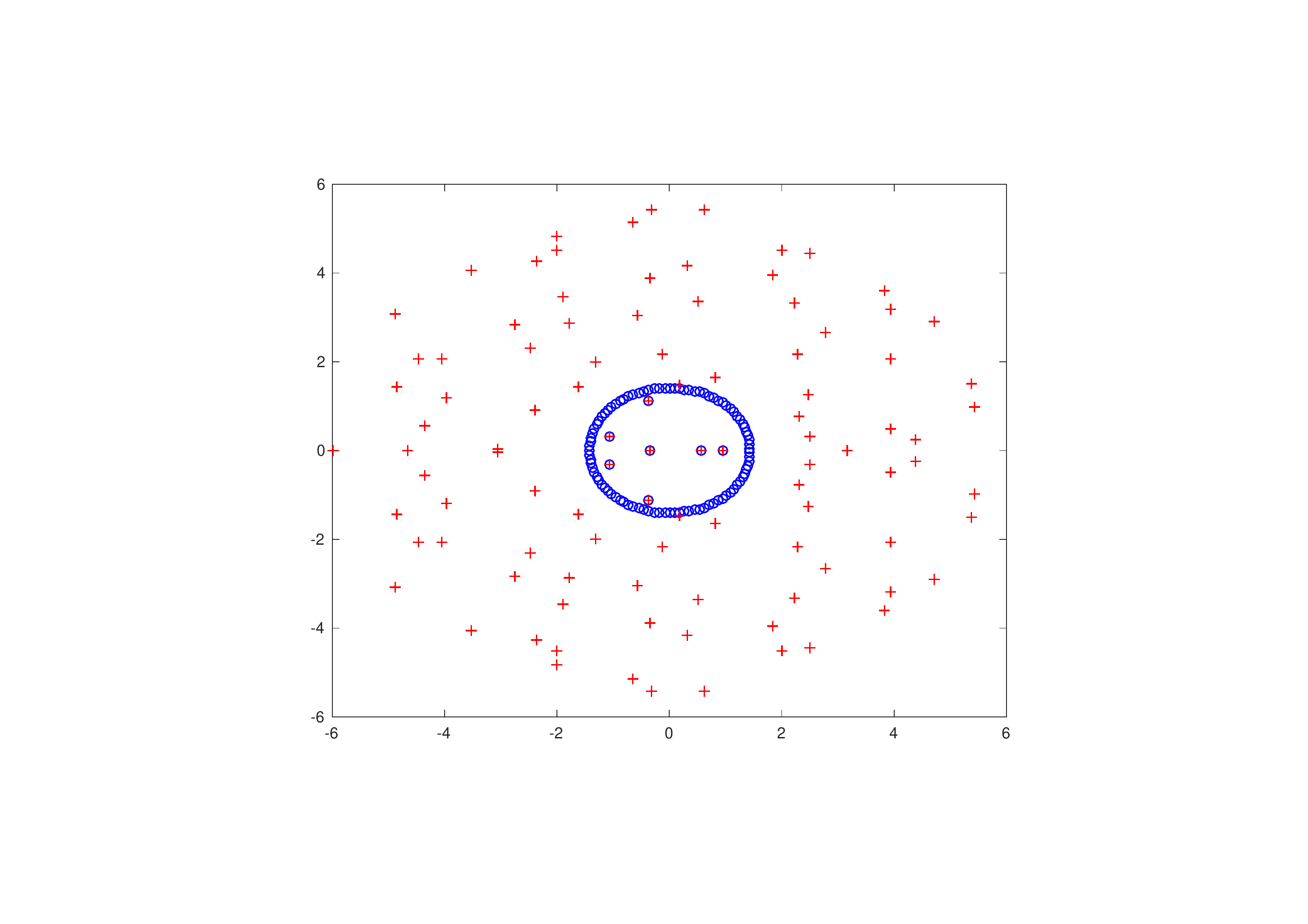}
\caption{Eigenvalues of a $100\times 100$ random matrix; see Example \ref{ex:matrixeig}. The blue circles are the eigenvalues computed via interpolation, the red crosses are the eigenvalues computed by {\tt eig}. Here 120 interpolation nodes were used.}
\label{fig:randmatrix120}      
\end{figure}
\begin{figure}
  \includegraphics[width=\textwidth]{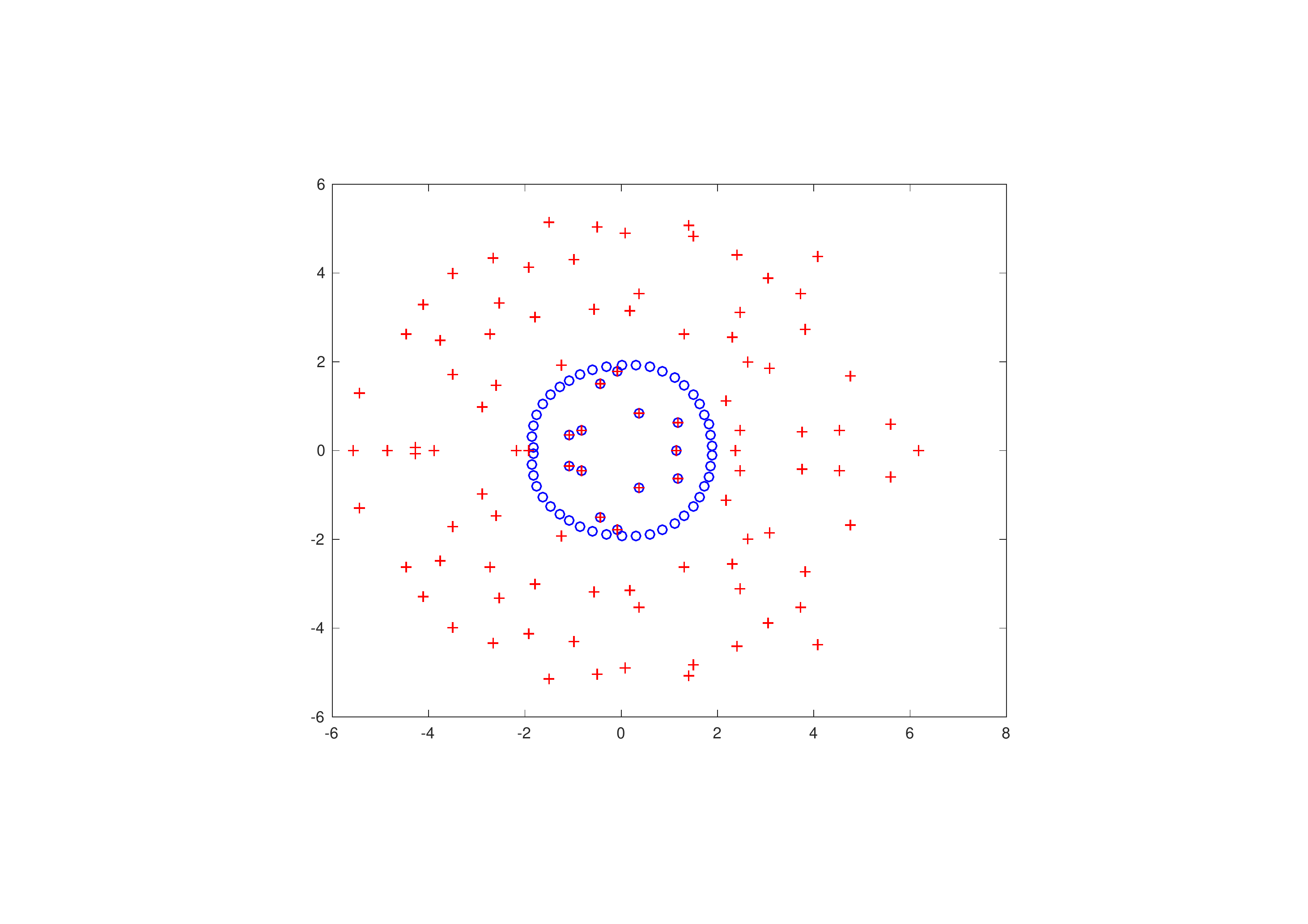}
\caption{Eigenvalues of a $100\times 100$ random matrix; see Example \ref{ex:matrixeig}. The blue circles are the eigenvalues computed via interpolation, the red crosses are the eigenvalues computed by {\tt eig}. Here 60 interpolation nodes were used.}
\label{fig:randmatrix60}      
\end{figure}

\begin{example}\label{ex:nlevp}
We consider some nonlinear eigenvalue problems taken from \cite{NLEVP}:
\begin{enumerate}
\item {\tt mobile\_manipulator}: this $5\times 5$ quadratic matrix polynomial is close to being nonregular;  
\item {\tt gen\_tpal2}: a real T-palindromic quadratic matrix polynomial of size $16\times 16$ whose eigenvalues lie on the unit circle;
\item {\tt closed\_loop}: the eigenvalues of this $2\times 2$ parameterized quadratic polynomial lie inside the unit disc for a suitable choice of the parameter;
\item {\tt relative\_pose\_5pt}: a $10\times 10$ cubic matrix polynomial which comes from the five point relative pose problem in computer vision. See Figure \ref{fig:pose5pt} for a plot of the eigenvalues.
\end{enumerate}
Table \ref{tab:nlevp} shows the distance, in $\infty$-norm, between the eigenvalues computed via interpolation followed by Fast QZ and the eigenvalues computed via {\tt polyeig}. 
\end{example}

\begin{table}
\caption{Distance between the eigenvalues computed by interpolation+Fast QZ and {\tt polyeig}, for some problems taken from the NLEVP suite (Example \ref{ex:nlevp}). Here $N$ is the number of interpolation points. The error for the fourth problem is computed on all the eigenvalues (fourth line) and on the eigenvalues in the unit disk (fifth line, error marked by an asterisk.)}
\label{tab:nlevp} 
\centering
\begin{tabular}{c|c|c}
\hline\noalign{\smallskip}
problem  & error & $N$ \\
\noalign{\smallskip}\hline\noalign{\smallskip}
{\tt mobile\_manipulator}&$2.53$e$-15$ &$20$\\
{\tt gen\_tpal2}&$1.61$e$-9$ &$50$\\
{\tt closed\_loop}&$1.22$e$-15$&$10$\\
{\tt relative\_pose\_5pt}&$2.81$e$-10$&$40$\\
{\tt relative\_pose\_5pt}&$8.99$e$-15^{(*)}$&$40$\\
\noalign{\smallskip}\hline
\end{tabular}
\end{table}

\begin{figure}
\centering
  \includegraphics[width=\textwidth]{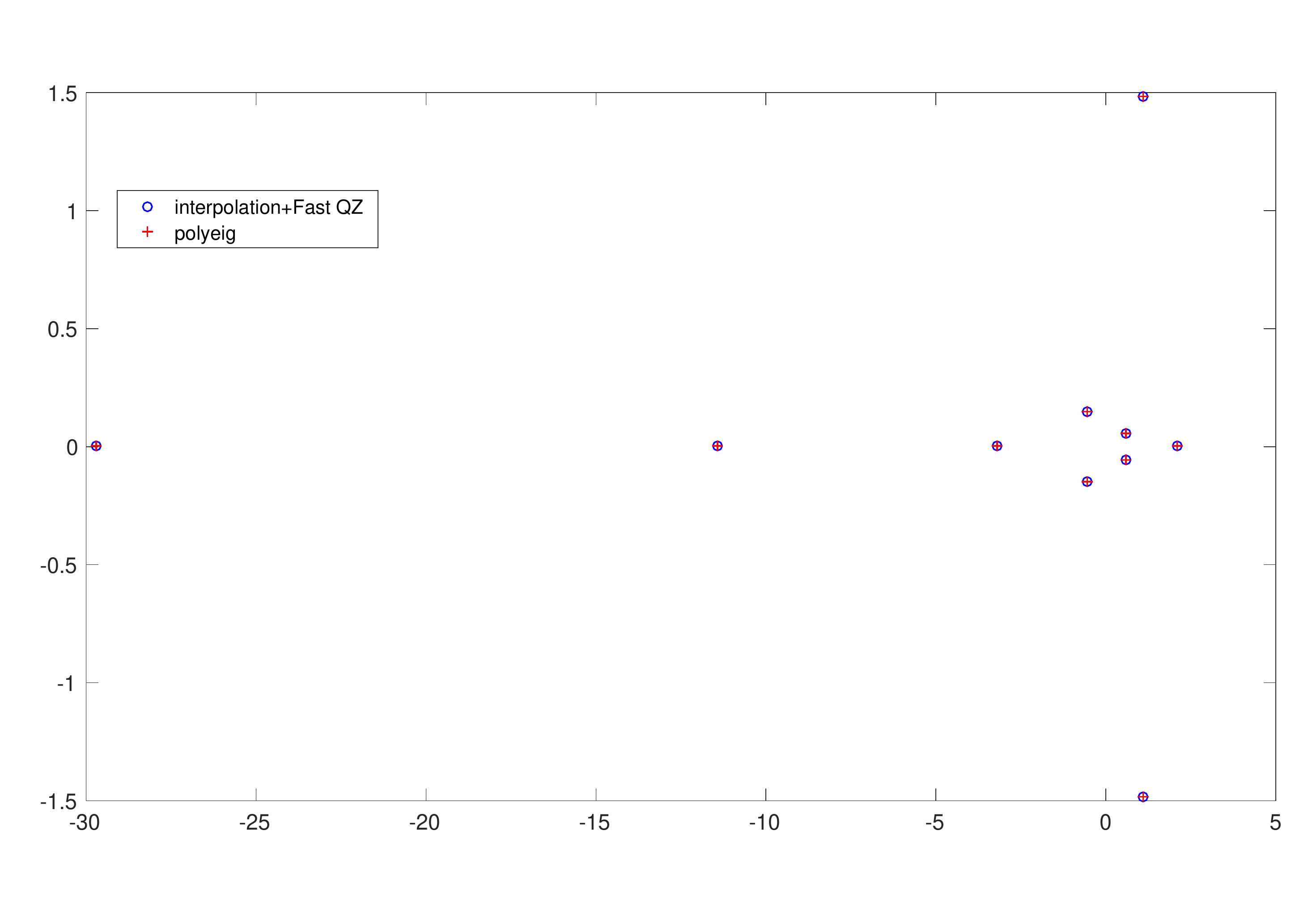}
\caption{Eigenvalues of the matrix polynomial {\tt relative\_pose\_5pt}; see Example \ref{ex:nlevp}. The blue circles are the eigenvalues computed via interpolation, the red crosses are the eiganvalues computed by {\tt polyeig}.}
\label{fig:pose5pt}      
\end{figure}

\begin{example}\label{ex:random} 
Random matrix polynomials: we use matrix polynomials with random coefficients (given by the Matlab function {\tt rand}). Table \ref{tab:random} shows errors with respect to {\tt polyeig} for several values of the degree and of the size of the polynomial.
\end{example}
\begin{table}
\caption{Distance between the eigenvalues computed by interpolation+Fast QZ and {\tt polyeig}, for random matrix polynomials of different degrees and sizes (Example \ref{ex:random}). The error is computed on the eigenvalues contained in the disk of center $0$ and radius $2$.}
\label{tab:random} 
\centering
\begin{tabular}{c|c|c}
\hline\noalign{\smallskip}
degree  & size & error \\
\noalign{\smallskip}\hline\noalign{\smallskip}
$10$&$5$ &$1.12$e$-11$\\
$10$&$10$ &$1.11$e$-9$\\
$10$&$20$ &$2.93$e$-5$\\
$15$&$5$ &$6.98$e$-9$\\
$15$&$10$ &$5.15$e$-9$\\
$15$&$20$ &$3.24$e$-4$\\
$20$&$5$ &$2.13$e$-10$\\
$20$&$10$ &$4.50$e$-9$\\
\noalign{\smallskip}\hline
\end{tabular}
\end{table}

\begin{example}\label{ex:lambert}
We consider here a nonlinear, non polynomial example: the Lambert equation
\begin{equation}
w^6\exp(w^6)=0.1.\label{lambert}
\end{equation}
This equation has two real solutions
$$
w=\pm W(0,0.1)\approx \pm 0.671006
$$
and complex solutions of the form
\begin{eqnarray*}
&& w=\pm\left(W(\nu,x)\right)^{1/6},\\
&& w=\pm(-1)^{1/3}\left(W(\nu,x)\right)^{1/6},\\
&& w=\pm(-1)^{2/3}\left(W(\nu,x)\right)^{1/6},
\end{eqnarray*}
where $W(\nu, x)$, with $\nu\in\mathbb{Z}$ and $x\in\mathbb{C}$, denotes the $\nu$-th branch of the product-log function (Lambert function) applied to $x$.

Such solutions can be computed in Matlab using the {\tt lambertw} function: in the following we will consider them as the ``exact'' solutions. We want to test the behavior of the ``interpolation+QZ'' approach in this case. Experiments suggest the following remarks:
\begin{itemize}
\item
As expected, interpolation only ``catches'' roots inside the unit disk: see Figure \ref{fig2}. Since the roots of \eqref{lambert} are mostly outside the unit disk, we introduce a scaled version of the equation:
\begin{equation}
\alpha^6 w^6\exp(\alpha^6 w^6)=0.1,\label{lambert_scaled}
\end{equation}
where $\alpha\geq 1$ is a scaling parameter. The drawback is that, as $\alpha$ grows, the matrix pencil becomes more unbalanced.
\item
A large number of interpolation nodes is needed (considerably larger than the number of approximated roots). 
\end{itemize} 
See Figure \ref{fig1} for an example.

Tables \ref{alpha1}, \ref{alpha15} and \ref{alpha17} show the accuracy of the approximation for several values of $\alpha$ and of the number of nodes. Here by ``distance'' we denote the distance in $\infty$-norm between the roots of \eqref{lambert_scaled} inside the unit disk and their approximations computed via interpolation followed by structured or unstructured QZ.

\end{example} 

\begin{figure}
\centering
\includegraphics[width=\textwidth]{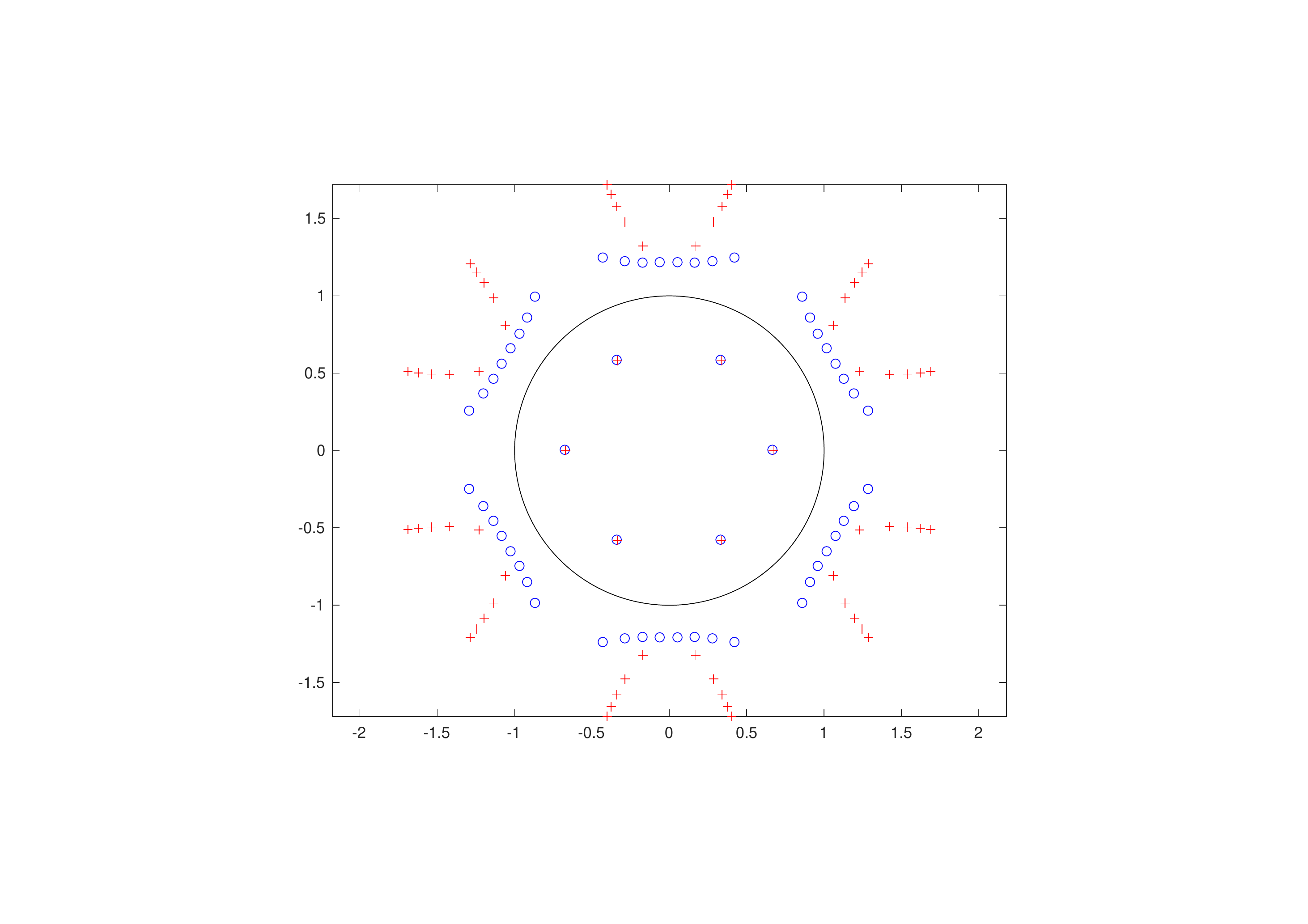}
\caption{This is a plot of the roots of \eqref{lambert} in the complex plane. Red crosses denote the ``exact'' roots computed by {\tt lambertw} with $\nu$ up to $5$. Blue circles are the roots computed via interpolation+QZ with 60 nodes. Note that only the 6 roots inside the unit circle (plotted in black for reference) are correctly approximated.}\label{fig2} 
\end{figure}

\begin{figure}
\centering
\includegraphics[width=0.8\textwidth]{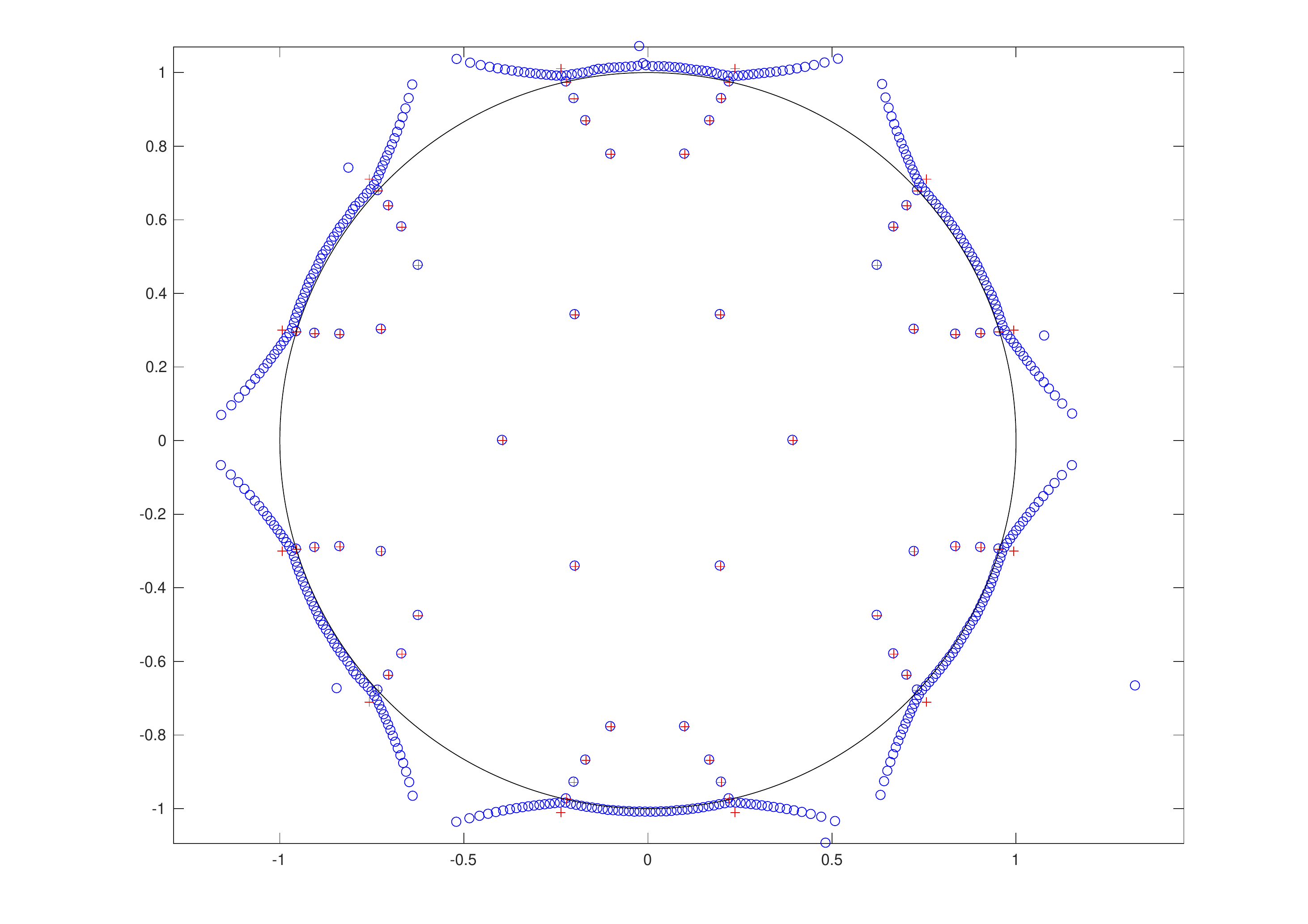}
\caption{This is a plot of the roots of \eqref{lambert_scaled} in the complex plane, with $\alpha=1.7$, so that $54$ roots are inside the unit circle (plotted in black for reference). Red crosses denote the ``exact'' roots computed by {\tt lambertw} with $\nu$ up to $5$. Blue circles are the roots computed via interpolation+QZ. Here we have taken 446 interpolation nodes. }\label{fig1} 
\end{figure}

\begin{table}[h]
\caption{Example \ref{ex:lambert}: we take $\alpha=1$ and there are $6$ roots inside the unit circle. The number of nodes is denoted by $N$, taken as $N=6k+1$ for some $k\in\mathbb{N}$, for symmetry. The accuracy in the approximation of the roots is the same for structured and unstructured QZ. An accuracy of about $10^{-15}$ is reached using 103 nodes.}\label{alpha1}
\centering
\begin{tabular}{|l|l|}
\hline\noalign{\smallskip}
$N$&distance\\
\noalign{\smallskip}\hline\noalign{\smallskip}
$7$&$1.88$e$-1$\\
$19$&$2.53$e$-2$\\
$31$&$1.18$e$-3$\\
$43$&$2.80$e$-5$\\
$55$&$3.88$e$-7$\\
$67$&$3.52$e$-9$\\
\noalign{\smallskip}\hline
\end{tabular}
\end{table}

\begin{table}[h]
\caption{Example \ref{ex:lambert}: we take $\alpha=1.5$ and there are $18$ roots inside the unit circle.}\label{alpha15}
\centering
\begin{tabular}{|l|ll|}
\hline\noalign{\smallskip}
$N$&distance (unstructured)&distance (structured)\\
\noalign{\smallskip}\hline\noalign{\smallskip}
$181$&$1.67$e$-1$&$1.67$e$-1$\\
$217$&$7.50$e$-4$&$7.50$e$-4$\\
$253$&$4.36$e$-7$&$4.36$e$-7$\\
$289$&$7.06$e$-10$&$1.40$e$-9$\\
\noalign{\smallskip}\hline
\end{tabular}
\end{table}

\begin{table}[h]
\caption{Example \ref{ex:lambert}: we take $\alpha=1.6$ and there are $30$ roots inside the unit circle.}\label{alpha16}
\centering
\begin{tabular}{|l|ll|}
\hline\noalign{\smallskip}
$N$&distance (unstructured)&distance (structured)\\
\noalign{\smallskip}\hline\noalign{\smallskip}
$301$&$1.62$e$-3$&$1.62$e$-3$\\
$361$&$9.73$e$-8$&$6.03$e$-7$\\
\noalign{\smallskip}\hline
\end{tabular}
\end{table}

\begin{table}[h]
\caption{Example \ref{ex:lambert}: we take $\alpha=1.7$ and there are $54$ roots inside the unit circle.}\label{alpha17}
\centering
\begin{tabular}{|l|ll|}
\hline\noalign{\smallskip}
$N$&distance (unstructured)&distance (structured)\\
\noalign{\smallskip}\hline\noalign{\smallskip}
$379$&$2.11$e$-1$&$2.11$e$-1$\\
$433$&$4.59$e$-4$&$1.34$e$-3$\\
\noalign{\smallskip}\hline
\end{tabular}
\end{table}

\begin{example}\label{ex:mele}
This example comes from the discretization of a nonlinear eigenvalue problem governing the eigenvibrations of a string with an elastically attached mass: see e.g., \cite{NLEVP}, \cite{effenberger} and \cite{solovev}. The original problem is given by
$$
\left\{\begin{array}{l}
-u''(x)=\lambda u(x)\\
u(0)=0\\
u'(1)+k\frac{\lambda}{\lambda-k/m}u(1)=0\end{array}\right.
$$
where the parameters $k$ and $m$ correspond to the elastic constant and to the mass, respectively. We are interested in computing the two smallest real eigenvalues $\lambda_1$ and $\lambda_2$.

The discretization is applied on a uniform grid with nodes $x_i=i/n, i=0,\dots,n$ and step $h=1/n$, yielding a nonlinear matrix eigenvalue problem of the form $K(\lambda)v=0$ with
$$
K(\lambda) = A-\lambda B+k\frac{\lambda}{\lambda-k/m}C,
$$
where
$$
A=\frac{1}{h}\left(\begin{array}{ccccc}
2 &1&\\
-1&\ddots&\ddots&\\
&\ddots&&2&-1\\
&&&-1&1
\end{array}\right),\quad
B=\frac{h}{6}\left(\begin{array}{ccccc}
4 &1&\\
1&\ddots&\ddots&\\
&\ddots&&4&1\\
&&&1&2
\end{array}\right),\quad
C=e_ne_n^T
$$
and $e_n=[0,\ldots,0,1]^T$. Here we choose $k=2$ and $m=1$. In this case the eigenvalues are known to be $\lambda_1\approx 0.572224720810327$ and $\lambda_2\approx 6.02588212472795$. Interpolation of $\det(K(\lambda))/\det(K(\lambda))'$ on the unit circle followed by FastQZ allows us to approximate $\lambda_1$, see Table \ref{table:mele}. The same technique applied after a suitable translation of $\lambda$ (here $\lambda\rightarrow\lambda-6$) gives approximations for $\lambda_2$.

Note that, since $K(\lambda)$ is a rational function, we should make sure that its pole $\tilde{\lambda}=k/m$ does not lie in the unit disk, otherwise interpolation might not be able to detect the eigenvalues. Experiments with $k=0.01$ and $m=1$, for instance, showed that the first eigenvalue (in this case $\lambda_1\approx 9.90067\cdot 10^{-3}$)  could not be computed via interpolation. 

\end{example}

\begin{table}[h]
\caption{Example \ref{ex:mele}: the table shows the absolute errors on $\lambda_1$ and $\lambda_2$ for several values of $n$ (the number of nodes on the discretization grid). The number of interpolation nodes is taken as $N=100$. The distance between the approximations computed by structured and unstructured QZ is always of the order of the machine epsilon.}\label{table:mele}
\centering
\begin{tabular}{|l|ll|}
\hline\noalign{\smallskip}
$n$&error on $\lambda_1$&error on $\lambda_2$\\
\noalign{\smallskip}\hline\noalign{\smallskip}
$100$&$5.19$e$-3$&$8.48$e$-2$\\
$500$&$1.04$e$-3$&$1.72$e$-2$\\
$1000$&$5.23$e$-4$&$8.61$e$-3$\\
$5000$&$1.05$e$-4$&$1.72$e$-3$\\
\noalign{\smallskip}\hline
\end{tabular}
\end{table}

\section{Conclusions}\label{sec:6}
In this paper we have developed and tested a fast structured version of the double-shift QZ eigenvalue method tailored to a particular class of real matrix pencils. This class includes companion pencils, as well as pencils arising from barycentric Lagrange interpolation. Numerical tests confirm the expected complexity gains with respect to the classical method and show that our fast algorithm behaves as backward stable in practice, while retaining an accuracy comparable to the nonstructured method.

We also propose an application to
nonlinear eigenvalue problems  using interpolation techniques. While preliminary experiments look promising, this approach deserves further investigation, which will be the subject of further work.

\vspace{10pt}

{\bf Acknowledgements:} Thanks to Thomas Mach for useful suggestions concerning the Fortran implementation of Givens transformations.

\section*{Appendix}

In this appendix we give a formal proof of the correctness of the algorithm stated in Section 
\ref{sec:4}.  Specifically,  we prove the following:

\begin{theorem}\label{MAYM12D}
Let $(A,B)\in \mathcal P_N$ be a  matrix pair with an upper Hessenberg matrix
$A=V-\B z\B w^*$ from the class $\mathcal H_N$ and an upper triangular matrix 
$B=U-\B p \B q^*$ from the class ${\mathcal T}_{N}$ with the unitary matrices
$V\in{\mathcal V}_{N},U\in{\mathcal U}_{N}$ and the vectors 
$\B z,\B w,\B p,\B q\in\mathbb R^N$.  Let $p(z)=\alpha + \beta z +\gamma z^2 \in \mathbb R[z]$ be 
a polynomial of degree at most 2.   Let $Q, Z$ be unitary matrices defined as in \eqref{smap22d}, \eqref{smip22d}
where the matrices $Q_i$ and $Z_i$, $1\leq i\leq N-1$, are generated  by the algorithm in Section \ref{sec:4}.
Then  $A_1=Q^** A Z$ and $B_1=Q^* B Z$ are upper Hessenberg and upper triangular, respectively, and, moreover, 
$Q^* p(A B^{-1}) \B e_1=\alpha \B e_1$ for a suitable scalar $\alpha\in \mathbb R$.
\end{theorem}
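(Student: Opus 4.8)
The plan is to show that the algorithm, stage by stage, implements exactly the standard implicit double-shift QZ step, so that correctness follows from the classical theory (the implicit Q theorem for pencils). The proof therefore has two conceptual layers: first, verify that the orthogonal matrices $Q_1$ and $Z_1,Q_2,Z_2,\ldots$ produced by the algorithm are the same ones the classical implicit QZ step would produce; second, invoke uniqueness of the Hessenberg/triangular reduction under the shifting condition \eqref{semer22d} to conclude that $A_1=Q^*AZ$, $B_1=Q^*BZ$ have the claimed forms and that $Q^*p(AB^{-1})\B e_1=\alpha\B e_1$.

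I would organize the induction along the ``chasing the bulge'' loop. The inductive invariant should assert that after the $k$-th pass through the loop, the partially transformed pencil $(Q_k^*\cdots Q_1^* A Z_1\cdots Z_{k-1},\ Q_k^*\cdots Q_1^* B Z_1\cdots Z_{k-1})$ has the correct bulge-plus-Hessenberg structure, \emph{and} that the auxiliary quantities carried by the algorithm — the $3\times3$ blocks $f^A_{k+2},\varphi^A_{k+2}$, $f^B_{k+1},\varphi^B_{k+1}$, $f^U_{k+1},\phi^U_{k+1}$, the perturbation subvectors $\chi_{k+2},\gamma_{k+1},c_{k+1},\theta_{k+1}$, and the generator pieces $\tl g_V,\tl h_V,\tl b_V$ etc. — are precisely the relevant entries/generators of the current transformed matrices. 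Concretely: $f^A_{k+2}$ should be the $2\times2$ corner of the current $V$-part needed to form the next column, $\epsilon^A_{k+2}$ should reconstruct the off-diagonal coupling through the quasiseparable formula \eqref{qs1}, and the products $\Om_k = (\text{block})Z_k$ in \eqref{aprl18d} should equal the corresponding $3\times3$ window of $A$ after applying $Z_k$ on the right. One then checks that \eqref{aprle18d} annihilates exactly the spurious subdiagonal entry (the bulge), matching the classical chase. The preparative phase is the base case: I must check that $\B s=(p(AB^{-1})\B e_1)(1:3)$ as computed in the algorithm agrees with the true vector — here one uses that $B^{-1}$ is again upper triangular, that $\B e_1^* $ only ``sees'' the leading $3\times3$ principal parts because $A$ is Hessenberg and $p$ has degree $\le2$, and that $d_U(i)=d_B(i)+p(i)q(i)$ correctly recovers diagonal entries of the orthogonal factor $U$. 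The closing phase (the last $Q_{N-1},Z_{N-2},Z_{N-1}$) is handled by the same invariant with empty/zero blocks plugged in, as the algorithm's own convention dictates.

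The key algebraic identities to verify, and where I expect the real work to lie, are the ``generator propagation'' formulas \eqref{liufre20d}, \eqref{leap18d}, \eqref{liufg20d}, \eqref{mmay3hbd}: I must show that conjugating $U$ (resp.\ $V$) by the local $Q_k,Z_k$ and re-reading off generators via the block-triangular device \eqref{aprmn18s}–\eqref{appr18} produces exactly the quantities the algorithm stores, with the orders staying $\le 1$ for $U_1$ and $\le 2$ for $V_1$ (guaranteed a priori by \eqref{setp26}, \eqref{setp27} once we know $U_1\in\mathcal U_N$, $V_1\in\mathcal V_N$, which in turn follows from \eqref{qzss}). This is essentially a bookkeeping computation: write the relevant $3\times(r+2)$ window of $U$ (resp.\ $V$) in terms of its generators, left-multiply by $Q_k^*$ and right-multiply by $Z_k\oplus I$, and match the partitioned output. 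The subtlety is that $V$ needs upper \emph{triangular} (diagonal-inclusive) generators of order two while $U$ needs upper quasiseparable generators of order one, so the two propagation computations, though parallel, are not identical and must each be checked against the precise partition conventions \eqref{pra182d}, \eqref{fevra20d}, \eqref{pra18d}.

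Once all local identities are in place, the final sentence of the theorem is almost free: by construction $Q_1$ kills all but the first entry of $p(AB^{-1})\B e_1$, and every subsequent $Q_i$ acts trivially on $\B e_1$ (they are identity in the first coordinate for $i\ge 2$ by \eqref{smip22d}), so $Q^*p(AB^{-1})\B e_1 = Q_1^* \B s \oplus 0 = \alpha \B e_1$; combined with $A_1=Q^*AZ$ Hessenberg, $B_1=Q^*BZ$ triangular (which the invariant delivers at $k=N-1$), this is exactly the content of the implicit double-shift QZ step, so the classical implicit-Q uniqueness theorem identifies $(A_1,B_1)$ with the pencil produced by one genuine QZ iteration. \textbf{The main obstacle} I anticipate is not conceptual but the sheer volume of index-matching in the inductive step — keeping straight which shifted index ($k$, $k+1$, $k+2$, $k+3$) labels which matrix ($Q$ vs.\ $Z$, $U$/$B$ vs.\ $V$/$A$) and verifying that the $3\times3$ windows genuinely capture all nonzeros of the current iterate (this relies essentially on the Hessenberg structure of $A$ limiting fill-in to a single bulge, and on the rank-one-plus-unitary structure letting the far-off-diagonal part be reconstructed from generators rather than stored explicitly).
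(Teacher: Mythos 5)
Your proposal is correct in outline but takes a genuinely different route from the paper.

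The paper's proof does not invoke the implicit Q theorem at all, nor does it frame the argument as an induction over the bulge chase. Instead it is a direct computation: $A$, $V$, $Q^*$ and $Z$ are recast as \emph{block} quasiseparable matrices with carefully chosen block sizes (the partition in \eqref{aprmn18} and its analogues for $Q^*$ and $Z$), and the products $A_1 = Q^*AZ$, $V_1 = Q^*VZ$ are evaluated by the structured multiplication recursions of Lemma~31.1 and Corollary~31.2 from \cite{EGH2}. The substance of the proof is then a line-by-line identification of the recursion variables $\bt_k^A, f_k^A, \phi_k^A, \varphi_k^A, \chi_k, \gamma_k$ with the quantities produced by the algorithm (equations \eqref{decc22d}--\eqref{janj1f}), together with the observation that the choice of $Q_{k+1}$ in \eqref{aprle18d} forces the lower generator $\tl q(k+3)$ to vanish (equation \eqref{epra23d}), which \emph{is} the Hessenberg property. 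In other words, the paper offloads the entire structural framework onto the cited multiplication algorithms, and the remaining work is bookkeeping with those algorithms as the reference point.

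Your approach uses the classical dense bulge chase as the reference point instead, verifies that the algorithm's $3\times3$ windows and generator updates reproduce it, and then invokes implicit-Q uniqueness. This is sound and has the pedagogical advantage of explaining \emph{why} the formulas look the way they do, but it requires you to redevelop the generator-propagation machinery from scratch, since you are not leaning on Lemma 31.1 / Corollary 31.2. You also correctly identify the generator-propagation identities as where the real work lies — but note that your proposal outlines this work rather than carrying it out, whereas the paper actually performs the identification (it is, in effect, a derivation of the algorithm rather than an a posteriori check). Two smaller observations: (i) the final appeal to the implicit Q theorem is unnecessary for the theorem as stated — the claim is only that $A_1$ is Hessenberg, $B_1$ is triangular, and the shift condition holds, not that $(A_1,B_1)$ is \emph{the} QZ iterate; (ii) the paper explicitly only treats the $(A,V)$ half and states that $(B,U)$ is analogous, so a complete version of either proof must do both.
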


\begin{proof}
The property $Q^* p(A B^{-1}) \B e_1=\alpha \B e_1$ easily follows by construction.   The proof of the remaining properties 
is constructive by  showing that 
$A_1$ is upper Hessenberg and $B_1$ is upper triangular  and then  providing structured representations of the entries 
of  their  unitary components
$V_1=Q^* V Z$ and $U_1=Q^* U Z$.  We restrict ourselves to consider  $A_1$ and $V_1$ since the 
  computation of  $B_1$ and $U_1$ and of the perturbation vectors  can be treated in a similar way.

We treat $A$ and $V$ as block matrices with entries of sizes 
$m^A_i\times n^A_j,\;i,j=1,\dots,N+3$, where
\be\label{aprmn18}
\begin{gathered}
m^A_1=\dots=m^A_N=1,\;m^A_{N+1}=m^A_{N+2}=m^A_{N+3}=0,\\
n^A_1=0,\;n^A_2=\dots=n^A_{N+1}=1,\;n^A_{N+2}=n^A_{N+3}=0.
\end{gathered}
\end{equation}
Relative to this partition the matrix $V$ has diagonal entries
\be\label{arep18u}
\begin{gathered}
d_V(1)\;\mbox{to be the $1\times0$ empty matrix},\\
d_V(k)=\sg^V_{k-1}=\sg^A_{k-1}+z(k)w(k-1),\;k=2,\dots,N,\\
d_V(N+1),d_V(N+2),d_V(N+3)\;\mbox{to be the $0\times1,0\times0,0\times0$ empty 
matrices},
\end{gathered}
\end{equation}
upper quasiseparable generators
\be\label{aprrl18u}
\begin{gathered}
\hat g_V(k)=g_V(k),\;k=1,\dots,N,\quad
\hat g_V(N+1),g_V(N+2)\;\mbox{to be the $0\times0$ empty matrices},\\
\hat h_V(k)=h_V(k-1),\;k=2,\dots,N+1,\\
\hat h_V(N+2),\hat h_V(N+3)\;\mbox{to be the $0\times0$ empty matrices},\\
\hat b_V(k)=b_V(k-1),\;k=2,\dots,N,\\
\hat b_V(N+1),\hat b_V(N+2)\;\mbox{to be the $r^V_N\times0,0\times0$ empty 
matrices}
\end{gathered}
\end{equation}
and lower quasiseparable generators
\be\label{desc26}
\begin{gathered}
p_V(k)=z(k),\;k=2,\dots,N,\\ p_V(N+1),p_V(N+2)\;\mbox{to be the $0\times 1$
empty matrices},\\
q_V(1)\;\mbox{to be the $1\times0$ empty matrix},\\
q_V(k)=w(k-1),\;k=2,\dots,N+1,\\
a_V(k)=1,\;k=2,\dots,N+1.
\end{gathered}
\end{equation}
Relative to the partition (\ref{aprmn18}) the matrix $A$ is a block upper 
triangular matrix with diagonal entries
\be\label{arep18}
\begin{gathered}
d_A(1)\;\mbox{to be the $1\times0$ empty matrix},\\
d_A(k)=\sg^A_{k-1},\;k=2,\dots,N,\\
d_A(N+1),d_A(N+2)\;\mbox{to be the $0\times1,0\times0$ empty matrices}.
\end{gathered}
\end{equation}
Moreover using (\ref{a}) we obtain upper quasiseparable of the matrix $A$ 
relative to the partition (\ref{aprmn18}) with orders
\be\label{decl1}
r^A_k=r^V_k+1,\;k=1,\dots,N,\quad r^A_{N+1}=r^A_{N+2}=0
\end{equation}
by the formulas
\be\label{aprrl18g}
\begin{gathered}
g_A(k)=\left[\ba{cc}g_V(k)&-z(k)\ea\right],\;k=1,\dots,N,\\
g_A(N+1),g_A(N+2)\;\mbox{to be the $0\times0$ empty matrices},
\end{gathered}
\end{equation}
\be\label{aprrl18bh}
\begin{gathered}
h_A(k)=\left[\ba{c}h_V(k-1)\\w(k-1)\ea\right],\;k=2,\dots,N+1,\\
h_A(N+2),h_A(N+3)\;\mbox{to be the $0\times0$ empty matrix},\\
b_A(k)=\left(\ba{cc}b_V(k-1)&0\\0&1\ea\right),\;k=2,\dots,N,\\
b_A(N+1),b_A(N+2)\;\mbox{to be the $(r^V_N+1)\times0,0\times0$ empty 
matrices}.
\end{gathered}
\end{equation}

Using (\ref{smap22d}) and setting
\be\label{aprela18d}
\begin{gathered}
\tl S^A_1=\tl S^A_2=I_N,\quad \tl S^A_i=\tl Q^*_{i-2},\;i=3,\dots,N+1,\quad
\tl S^A_{N+2}=\tl S^A_{N+3}=I_N,\\
\tl T^A_1=\tl T^A_2=\tl T^A_3=I_N,\quad 
\tl T^A_i=\tl Z_{i-3},\;i=4,\dots,N+2,\quad
\tl T^A_{N+3}=I_N
\end{gathered}
\end{equation}
we get
\be\label{irap18d}
Q^*=\tl S^A_{N+3}\cdots\tl S^A_1,\quad Z=\tl T^A_1\cdots\tl T^A_{N+3}.
\end{equation}
We have
\begin{gather*}
\tl S^A_1={\rm diag}\{S^A_1,I_{N-1}\},\;
\tl S^A_2={\rm diag}\{S^A_2,I_{N-2}\};\\
\tl S^A_k={\rm diag}\{I_{k-2},S^A_k,I_{N-k}\},\; k=2,\dots,N;\\
\tl S^A_{N+1}={\rm diag}\{I_{N-2},S^A_{N+1}\},\;
\tl S^A_{N+2}={\rm diag}\{I_{N-1},S^A_{N+2}\},\;
\tl S^A_{N+3}={\rm diag}\{I_N,S^A_{N+3}\}
\end{gather*}
with
\be\label{aprel21d}
\begin{gathered}
S^A_1=1,\;S^A_2=I_2,\quad S^A_k=Q^*_{k-2},\;k=3,\dots,N+1,\quad 
S^A_{N+2}=1,\\ 
S^A_{N+3}\;\mbox{to be the $0\times0$ empty matrix}
\end{gathered}
\end{equation}
and
\begin{gather*}
\tl T^A_1={\rm diag}\{T^A_1,I_{N}\},\;
\tl T^A_2={\rm diag}\{T^A_2,I_{N-1}\},\;\tl T^A_3={\rm diag}
\{T^A_2,I_{N-2}\};\\
\tl T_k={\rm diag}\{I_{k-4},T^A_k,I_{N-k+1}\},\; k=4,\dots,N+2;\\
\tl T^A_{N+3}={\rm diag}\{I_{N-1},T^A_{N+3}\}
\end{gather*}
with
\be\label{lerpa21d}
\begin{gathered}
T^A_1\;\mbox{to be the $0\times0$ empty matrix},\\
T^A_2=1,\;T^A_3=I_2,\quad
T^A_k=Z_{k-3},\;k=4,\dots,N+2,\quad T^A_{N+3}=1.
\end{gathered}
\end{equation}
We treat the lower Hessenberg matrix $Q^*$ as a block matrix with entries of
sizes $\tau^A_i\times m^A_j,\;i,j=1,\dots,N+3$, where
\be\label{aprr23td}
\tau^A_1=\tau^A_2=0,\quad \tau^A_3=\dots=\tau^A_{N+2}=1,\quad \tau^A_{N+3}=0.
\end{equation}
The matrix $Q^*$ has the representation considered in Lemma 31.1 in \cite{EGH2} with the matrices 
$S_k\;(k=1,\dots,N+2)$ of sizes $(\tau^A_1+r^S_1)
\times m^A_1,\;
(\tau^A_k+r^S_k)\times(m^A_k+r^S_{k-1})\;(k=2,\dots,N+1),\;\tau^A_{N+2}\times
(m^A_{N+2}+r^S_{N+1})$, where 
$$
r^S_1=1,\quad r^S_k=2,\;k=2,\dots,N),\quad r^S_{N+1}=1,\;r^S_{N+2}=0.
$$ 
We treat the upper Hessenberg matrix $Z$ as a block matrix with entries of
sizes $n^A_i\times\nu^A_j,\;i,j=1,\dots,N+2$, where
\be\label{aprr23nd}
\nu^A_1=\nu^A_2=\nu^A_3=0,\quad \nu^A_4=\dots=\nu^A_{N+3}=1.
\end{equation}
The matrix $Z$ has the representation considered in Lemma 31.1 in \cite{EGH2} with the matrices 
$T^A_k\;
(k=1,\dots,N+2)$  of sizes
$n^A_1\times(\nu^A_1+r^*_1),\;(n^A_k+r^*_{k-1})\times(\nu^A_k+r^*_k)\;
(k=2,\dots,N+1),\;(n^A_{N+2}+r^*_{N+1})\times\nu^A_{N+2}$, where 
$$
r^*_1=0,\;r^*_2=1,\quad r^*_k=2,\;k=3,\dots,N+1,\quad r^*_{N+2}.
$$

Now we  apply  the  structured multiplication algorithm for quasiseparable representations 
stated in Corollary 31.2 in \cite{EGH2} in order to determine diagonal entries 
$\tl d_A(k)\;(k=3,\dots,N+2)$ and quasiseparable generators 
$\tl q(j)\;(j=3,\dots,N+1);\;\tl g_A(i)\;(i=3,\dots,N+2)$ of the matrix 
$A_1=Q^*AZ$ as well as auxiliary variables $\bt_k^A,f_k^A,\phi_k^A,
\varphi_k^A$. The matrix $A_1$ is obtained as a block one with entries of sizes
$\tau^A_i\times\nu^A_j,\;i,j=1,\dots,N+3$.

For the variables $\bt_k=\bt^A_k,f_k=f^A_k,\phi_k=\phi^A_k$ used in
Corollary 31.2 we use the partitions
\be\label{ella}
\begin{gathered}
\bt^A_k=\left[\ba{cc}f^A_k&\phi^A_k\ea\right],\;
\phi^A_k=\left[\ba{cc}\varphi^A_k&-\chi_k\ea\right],\;k=2,\dots,N-1,
\end{gathered}
\end{equation}
with the matrices $f^A_k,\varphi^A_k,\chi_k$ of sizes $2\times2,2\times r^V_k,
2\times1$.
For $k=1,\dots,N-2$ combining Corollary 31.2  with
(\ref{aprel21d}), (\ref{lerpa21d}), (\ref{arep18}) and 
(\ref{aprrl18g}),(\ref{aprrl18bh}) we get
\be\label{decc22d}
\begin{gathered}
\left(\ba{cc}\tl d_A(k+3)&\tl g_A(k+3)\\\tl q(k+3)&\beta^A_{k+3}\ea\right)=\\
Q_{k+1}^*
\left(\ba{ccc}f^A_{k+2}&\phi^A_{k+2}h_A(k+2)&\phi^A_{k+2}b_A(k+2)\\
0&\sg^A_{k+2}&g_A(k+3)\ea\right)
\left(\ba{cc}Z_k&0\\0&I_{r^A_{k+3}}\ea\right),\\
k=1,\dots,N-3,\\
\left(\ba{cc}\tl d_A(N+1)&\tl g_A(N+1)\\\tl q(N+1)&\beta^A_{N+1}\ea\right)=
Q_{N-1}^*\left(\ba{cc}f^A_N&\phi^A_Nh_A(N)\ea\right)Z_{N-2}.
\end{gathered}
\end{equation}
Using (\ref{ella}) and (\ref{aprrl18bh}) we get
\be\label{el21}
\phi^A_{k+2}h_A(k+2)=\epsilon^A_{k+2},\quad k=1,\dots,N-2
\end{equation}
and
\be\label{ell21}
\phi^A_{k+2}b_A(k+2)=
\left[\ba{cc}\varphi^A_{k+2}b_V(k+2)&-\chi_{k+2}\ea\right],
\quad k=1,\dots,N-3
\end{equation}
with $\epsilon^A_{k+2}$ as in (\ref{ep}).

Inserting (\ref{el21}), (\ref{ell21}) in (\ref{decc22d}) and using 
(\ref{ella}), (\ref{aprrl18g})  we obtain
\be\label{appr23d}
\begin{gathered}
\left(\ba{cccc}\tl d_A(k+3)&\times&\times&\times\\
\tl q(k+3)&f^A_{k+3}&\varphi^A_{k+3}&-\chi_{k+3}\ea\right)=\\
Q_{k+1}^*\left(\ba{cccc}f^A_{k+2}&\epsilon^A_{k+2}
&\varphi^A_{k+2}b_V(k+2)&-\chi_{k+2}\\
0&\sg^A_{k+2}&g_V(k+3)&-z(k+3)\ea\right)
\left(\ba{cc}Z_k&0\\0&I_{r^A_{k+3}}\ea\right),\\
k=1,\dots,N-3.
\end{gathered}
\end{equation}
From (\ref{appr23d}) using (\ref{aprl18d}) we obtain the relations
\be\label{len}
\left(\ba{c}\tl d_A(k+3)\\\tl q(k+3)\ea\right)=
Q_{k+1}^*\left(\ba{c}\Omega_k(1,1)\\\Omega_k(2,1)\ea\right)
\end{equation}
and
\be\label{appr23dm}
\begin{gathered}
\left(\ba{ccc}\times&\times&\times\\f^A_{k+3}&\varphi^A_{k+3}&-\chi_{k+3}\ea\right)=\\
Q_{k+1}^*\left(\ba{ccc}\Omega_k(1:2,2:3)&\varphi^A_{k+2} &-\chi_{k+2}\\
\Omega_k(3,2:3)&g_V(k+3)&-z(k+3)\ea\right),\quad
k=1,\dots,N-4.
\end{gathered}
\end{equation}
From (\ref{len})  using (\ref{aprle18d}) we have
\be\label{apra23d}
\tl d_A(k+3)=(\sg^A_k)^{(1)},\;k=1,\dots,N-2
\end{equation}
and
\be\label{epra23d}
\tl q(k+3)=0,\;k=1,\dots,N-2.
\end{equation}
The formulas (\ref{aprr23td}) and (\ref{aprr23nd}) mean that 
$(\sg^A_k){(1)},\;k=1,\dots,N-2$ are subdiagonal entries of the matrix $A_1$
(treated as an usual scalar matrix). The equalities
(\ref{epra23d}) imply that $A_1$ is an upper Hessenberg matrix.

Next we apply  the structured multiplication algorithm stated in Lemma 31.1 in \cite{EGH2}
 to compute (block) upper quasiseparable 
generators $\tl g_V(i)\;(i=1,\dots,N+2),\;\tl h_V(j)\;(j=2,\dots,N+3),
\;\tl b_V(k)\;(k=2,\dots,N+2)$ with orders 
$$
\tl r^V_1=r^V_1,\;\tl r^V_2=r^V_2+1,\quad
\tl r^V_k=r^V_k+2,\;k=3,\dots,N,\;\tl r^V_{N+1}=2,\;\tl r^V_{N+1}=1
$$ 
and diagonal entries $\tl d_{V_1}(k)\;(k=1,\dots,N+3)$ of the matrix 
$V_1=Q^*VZ$. The matrix  $V_1$ is obtained as a block one with entries
of sizes $\tau^A_i\times\nu^A_j,\;i,j=1,\dots,N+3$, where the numbers 
$\tau^a_i,\nu^A_j$ are defined in (\ref{aprr23td}), (\ref{aprr23nd}). 

Using Lemma 31.1 and (\ref{aprel21d}), (\ref{lerpa21d}) we obtain that 
\be\label{decb27}
\begin{gathered}
\left(\ba{cc}\tl d_V(k)&\tl g_V(k)\\\times&\G_k\ea\right)=\\
\left(\ba{cc}Q_{k-2}^*&0\\0&1\ea\right)
\left(\ba{ccc}f^V_{k-1}&\phi^V_{k-1}h_V(k-1)&\phi^V_{k-1}b_V(k-1)\\
z(k)\al_{k-1}&\sg^V_{k-1}&g_V(k)\\\al_{k-1}&w^*(k-1)&0\ea\right)
\left(\ba{cc}Z_{k-2}&0\\0&I_{r^V_k}\ea\right),\\
\G_k=\left[\ba{cc}f^V_k&\phi^V_k\\\al_k&0\ea\right],\quad k=4,\dots,N,
\end{gathered}
\end{equation}
together with  the relation (\ref{mmay3hbd}).

From (\ref{decb27}) we find  that the
auxiliary matrices $\al_k\;(k=3,\dots,N)$ satisfy the relations
$$
\al_3=\left(\ba{cc}w(1)&w(2)\ea\right),\quad 
\left(\ba{cc}\times&\al_k\ea\right)=
\left(\ba{cc}\al_{k-1}&w(k-1)\ea\right)Z_{k-3},\;k=4,\dots,N.
$$
Comparing this with (\ref{feura21}), (\ref{feura21d}) we get
\be\label{jana2}
\al_k=\g_{k-1}^*,\quad k=3,\dots,N.
\end{equation}
Thus using  (\ref{jana2}) and (\ref{decb27}) we obtain (\ref{leap18d}),
(\ref{pra18d}).

Next we show that the auxiliary variables $f^A_k,\varphi^A_k\;(k=3,\dots,N)$ 
may be determined via relations (\ref{janj11f}), (\ref{janj1f}). 
 Take $\gamma_2$ as in  (\ref{feura21}) and assume that
for some $k$ with $1\le k\le N-2$ the relations
\be\label{janv4}
f^A_{k+2}=f^V_{k+2}-\chi_{k+2}\gamma_{k+1}^*,\quad 
\varphi^A_{k+2}=\phi_{k+2}^V
\end{equation}
hold. By (\ref{aprl18d}) and (\ref{appr23dm}) we have
\be\label{junirt4}
\begin{gathered}
\left(\ba{ccc}\tl d_A(k+3)&\times&\times\\0&f^A_{k+3}&\varphi^A_{k+3}\ea\right)=\\
Q_{k+1}^*\left(\ba{ccc}f^A_{k+2}&\epsilon^A_{k+2}&\phi^V_{k+2}b_V(k+2)\\
0&\sg^A_{k+2}&g_V(k+3)\ea\right)\left(\ba{cc}Z_k&0\\0&I_{r^V_{k+3}}\ea\right).
\end{gathered}
\end{equation}
Using (\ref{jll8o}) and (\ref{ep})  we get
\be\label{junia4}
\begin{gathered}
\left(\ba{cc}f^A_{k+2}&\epsilon_{k+2}\\0&\sg^A_{k+2}\ea\right)=\\
\left(\ba{ccc}f^V_{k+2}-\chi_{k+2}\gamma_{k+1}^*&
\phi^V_{k+2}h_V(k+2)-\chi_{k+2}w(k+2)\\
z(k+3)\gamma^*_{k+1}-z(k+3)\gamma^*_{k+1} &\sg^V_{k+2}-z(k+3)w(k+2)\ea\right).
\end{gathered}
\end{equation}
Thus combining (\ref{junirt4}) and (\ref{junia4}) together and
using (\ref{leap18d}), (\ref{pra18d}) and (\ref{oct14qqd})
we obtain (\ref{janj1f}).

\end{proof}

\end{document}